\newtheorem{theorem}{Theorem}[section]
\newtheorem{lemma}[theorem]{Lemma}
\newtheorem{Remark}[theorem]{Remark}
\newtheorem{definition}[theorem]{Definition}
\newtheorem{prop}[theorem]{Proposition}
\numberwithin{theorem}{section}
\numberwithin{equation}{section}
\def\endcproof{%
  \renewcommand\qedsymbol{\openbox\rlap{\textsubscript{\std@currentclaim}}}%
  \endproof
}
\xpretocmd{\claim}{\std@patch@thm}{}{}
\newcommand{\std@patch@thm}{%
  \makeatletter
  \regexpatchcmd{\@thm}
    {(\c{refstepcounter}\cB.\cP.2\cE.)}
    {\1\c{std@savelabel}}
    {}{}%
  \makeatother
}
\newcommand{\std@savelabel}{\xdef\std@currentclaim{\@currentlabel}}
\title[Generalized Theta Graph]{On the Turán Number of Generalized Theta Graphs}
\author{Xiao-Chuan Liu}
\address[Liu]{Instituto de Matemática da Universidade Federal de Alagoas,
	Av. Lourival Melo Mota, S/N, Maceió, Brasil} 
\email{lxc1984@gmail.com}
\author{Xu Yang}
\address[Yang]{Instituto de Computação da Universidade Federal de Alagoas,
	Av. Lourival Melo Mota, S/N, Maceió, Brasil} 
\email{yang@ic.ufal.br}
\begin{document}
\maketitle{}
\begin{abstract} Let $\Theta_{k_1,\cdots,k_\ell}$ denote the generalized theta graph, which consists of $\ell$ internally disjoint paths with lengths $k_1,\cdots, k_{\ell}$, connecting two fixed vertices. 
We estimate the corresponding extremal number $\text{ex}(n,\Theta_{k_1,\cdots,k_\ell})$. 
When the lengths of all paths have the same parity and at most one path has length 1, $\text{ex}(n,\Theta_{k_1,\cdots,k_\ell})$ is $O(n^{1+1/k^\ast})$, where $2k^\ast$ is the length of the smallest cycle in $\Theta_{k_1,\cdots,k_\ell}$. 
We also establish matching lower bound in the particular case of $\text{ex}(n,\Theta_{3,5,5})$.\end{abstract}

\section{Introduction}
For a graph $H$, define the extremal number $\text{ex}(n,H)$ as the maximum number of edges a graph on $n$ vertices can have without containing a copy of $H$. This number is also referred to as Tur{\'a}n number because of the pioneering work of Tur{\'a}n which initiated the whole area (see \cite{Turan1941}). One of the central problems in this area is to determine the order of the extremal number for a graph. 
The celebrated Erd\H{o}s-Stone-Simonovits theorem states that if the chromatic number of $H$ is denoted  by $\chi(H)$, then 
\begin{equation}
\text{ex}(n,H)=\big ( 1-\frac{1}{\chi(H)-1} +o(1)\big ) \frac {n^2}{2}.
\end{equation}

One calls an extremal problem degenerate, if the corresponding extremal number has order $o(n^2)$. 
Therefore, this theory focuses on forbidding bipartite graphs. Degenerate extremal graph theory recently has seen lots of exciting developments. 
See the recent survey~\cite{Furedi2013history} for a treatment of both the history 
as well as the  state of the art of this theory. 
A very interesting class of bipartite graphs is that of even cycles. Bondy and Simonovits showed in ~\cite{Bondy1974cycles} that $\text{ex}(n,C_{2k})= O(n^{1+1/k})$. 
Although these bounds were conjectured to be of the correct order, matching lower bounds were only found for the cases $k=2,3,5$ (see~\cite{wenger1991extremal} by Wenger for constructions of all these three cases. See also ~\cite{Conlon_cycles_C4C6C10} by Conlon for a geometric interpretation of these examples). However, the simplest unclear case of $\text{ex}(n,C_8)$ 
still seems to be very difficult. 

In order to better understand even cycles, people also look at a related class of graphs called theta graphs. With time, the study of theta graphs also 
became interesting in its own right, and recently it has drawn a lot of attention. 
By definition, the graph $\theta_{k,\ell}$ is obtained by fixing two vertices and connecting them with $\ell$ internally disjoint paths of length $k$. Note that in this notation, $\theta_{k,2}$ is simply a synonym for $C_{2k}$. 
Already in the 80s, 
Faudree and Simonovits in~\cite{Faudree1983} showed that for any $k,\ell$, the extremal number $\text{ex}(n,\theta_{k, \ell})=O(n^{1+1/k})$. 
On the other hand, some lower bounds were obtained only very recently. 
Based on the method of random polynomials invented by Bukh in ~\cite{Bukh2014random},
Conlon showed in ~\cite{Conlon2019graphs} that for any $k$, for all sufficiently large $\ell$,
$\text{ex}(n,\theta_{k,\ell})= \Omega(n^{1+\frac 1k})$. Note here the largeness of $\ell$ is not explicit.  

In this work, we focus on a larger class of graphs, often referred to as  generalized theta graphs. 
More precisely, we make the following definition. 
\begin{definition}\label{general_theta_graph} Let $k_1,\cdots,k_\ell$ be positive integers, with the same parity, in which $1$ appears at most once. Define the \textbf{generalized theta graph}, 
denoted by $\Theta_{k_1,\cdots,k_\ell}$, 
to be the graph obtained by fixing two vertices $w$ and $w'$, which are connected by $\ell$ internally disjoint paths with lengths $k_1,\cdots,k_\ell$, respectively.
\end{definition}
\begin{Remark} The parity requirement makes these graphs bipartite. 
\end{Remark}

The main result of this paper is the following upper bound. 
\begin{theorem}\label{main} 
Fix positive integers $k_1,\cdots,k_\ell$ with the same parity, in which $1$ appears at most once. Then, 
\begin{equation}
	\text{ex}(n,\Theta_{k_1,\cdots,k_\ell})=O(n^{1+\frac {1}{k^\ast}}),
\end{equation}
where $k^\ast= \frac 12 \text{min}_{1\leq i<j\leq \ell} (k_i+k_j)$.
\end{theorem}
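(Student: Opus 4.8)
The plan is to prove the bound by induction on the number of paths $\ell$, stripping off the \emph{longest} path at each step. Write $\Theta=\Theta_{k_1,\dots,k_\ell}$ with $k_1\le k_2\le\dots\le k_\ell$, so that $k^\ast=\tfrac12(k_1+k_2)$ and $K:=k_\ell$ is the largest length. The base case $\ell=2$ is nothing but Bondy--Simonovits: two internally disjoint paths of lengths $k_1,k_2$ between two vertices form a single cycle $C_{k_1+k_2}=C_{2k^\ast}$, so $\text{ex}(n,\Theta_{k_1,k_2})=\text{ex}(n,C_{2k^\ast})=O(n^{1+1/k^\ast})$. The point of peeling the \emph{longest} path is that $k_1,k_2$, hence $k^\ast$, are unchanged for $\Theta_{k_1,\dots,k_{\ell-1}}$, and that every path we ever have to \emph{add} afterwards has length $k_j\ge k^\ast$; adding a long path turns out to be much easier than fitting a short one.

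For the inductive step, suppose $e(G)\ge Cn^{1+1/k^\ast}$ with $C=C(k_1,\dots,k_\ell)$ large. First I clean: deleting vertices of degree below $\tfrac C2 n^{1/k^\ast}$ one at a time and keeping a component, I may assume $G$ is connected on $n'\le n$ vertices with minimum degree $\delta\ge\tfrac C2 n^{1/k^\ast}$, so that $\delta^{k^\ast}\ge\lambda\, n'$ with $\lambda=\lambda(C)$ as large as I wish. By the induction hypothesis (and $C$ large) $G$ contains a copy of $\Theta_{k_1,\dots,k_{\ell-1}}$; let $p,p'$ be its endpoints and let $S$, with $|S|\le s:=\sum_{i<\ell}(k_i-1)$, be its internal vertices. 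It remains to join $p$ to $p'$ in $G-S$ by a path of length exactly $k_\ell$. Parity enters exactly here: all $k_i$ share a parity, and the copy already contains a $p$--$p'$ path of length $k_1\equiv k_\ell\pmod 2$, so no parity obstruction can block a $k_\ell$-path. Everything thus reduces to a \emph{routing lemma}: in a connected graph with $\delta^{k^\ast}\gg n$, any two vertices joined by some short path of the correct parity are in fact joined by a path of every prescribed length between that parity class and roughly $\delta$, avoiding any prescribed set of $O(\ell K)$ vertices.

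Proving the routing lemma is the step I expect to be the real obstacle, and I would attack it with the breadth-first search / path-counting machinery behind the results of Bondy--Simonovits and Faudree--Simonovits. Since $\delta$ is enormous compared with the $k_i$, a greedy extension produces at least $(\delta/2)^t$ paths of length $t$ from any given vertex for every $t\le K$; these land on fewer than $n$ vertices, so by averaging a huge number of them terminate at a common vertex, and reading them off a BFS tree keeps them geodesic, hence of controlled parity. A branch-vertex pigeonhole --- if the tree formed by these paths had no vertex with $t_0$ children it would have fewer than $t_0^{k^\ast}$ leaves, contradicting that the number of leaves is a large power of $\lambda$ --- extracts, between some pair, a ``fan'' of $t_0\to\infty$ \emph{internally disjoint} paths of one common length $m\le k^\ast$, which one then re-extends to the target length $k_\ell\ge k^\ast$ using the spare degree; since $t_0$ dwarfs $|S|=O(\ell K)$, discarding the fan-paths that meet $S$ still leaves a usable one. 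The genuinely delicate points are (i) guaranteeing the fan is based at the \emph{prescribed} pair $(p,p')$ and not at whatever pair the BFS happens to deliver, and (ii) verifying that, after cleaning, $G$ really is ``locally dense'' around every pair that can serve as the endpoints of a copy of $\Theta_{k_1,\dots,k_{\ell-1}}$ --- concretely, that two BFS balls of radius $\approx k^\ast/2$ grown from $p$ and from $p'$ in $G-S$ overlap in many vertices. I would handle (i) and (ii) by running the path-count locally around $p$ and $p'$ and exploiting that the cleaning has already discarded every sparse region; this ``local density after cleaning'' estimate is, I believe, the crux of the whole proof.
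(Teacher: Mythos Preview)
Your inductive scheme is genuinely different from the paper's argument, and the place you yourself flag as ``the crux'' is in fact a real gap, not merely a detail.

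The difficulty is your routing lemma. After cleaning you only know $\delta\ge \tfrac{C}{2}n^{1/k^\ast}$; this gives you $(\delta/2)^{k_\ell}$ paths of length $k_\ell$ out of $p$, hence by pigeonhole some vertex $v$ receives enormously many of them, but nothing forces $v=p'$. Your fan argument has the same defect one level deeper: the branch--vertex pigeonhole produces a $\theta_{m,t_0}$ \emph{somewhere} in the BFS tree from $p$, and your proposed ``re-extension to the target length $k_\ell$'' does not explain how to steer the far end of that fan to the prescribed vertex $p'$. Minimum degree alone cannot do this: one can have $\delta^{k^\ast}\gg n$ and still have a particular pair $(p,p')$ that is only thinly connected, because the cleaning step discards low-degree \emph{vertices}, not sparse \emph{cuts}. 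Your points (i) and (ii) are therefore not technicalities but the entire content of the theorem; proving them would require essentially the full Faudree--Simonovits layered analysis that you were hoping to avoid. A related symptom: the induction hypothesis yields a \emph{single} copy of $\Theta_{k_1,\dots,k_{\ell-1}}$, whereas an extension argument of this kind typically needs a supersaturation statement (many copies, so that at least one sits in a region where routing is possible), which is strictly stronger than what you are inducting on.

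For contrast, the paper never tries to connect two prescribed vertices. It first passes (via a standard regularisation lemma) to a subgraph whose every degree lies in $[\tfrac{1}{C_0}m^{1/k^\ast},C_0m^{1/k^\ast}]$, and then grows from a root a ``regular almost-tree'' layer by layer. The forbidden $\Theta_{k_1,\dots,k_\ell}$ is used not to route a path, but to bound the density of the bipartite graph between consecutive layers: if too many vertices in layer $s$ are ``thick'' (have a neighbour in layer $s{+}1$ from which one can launch $\ell-1$ disjoint paths of the appropriate residual lengths into distinct first-layer subtrees), then $\Theta_{k_1,\dots,k_\ell}$ embeds; otherwise the layer expands by a factor $\asymp n^{1/k^\ast}$. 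Iterating to layer $k^\ast$ forces $|V|\gtrsim n$, which is the desired contradiction. The embedding of $\Theta$ thus happens all at once inside the layered structure, with both endpoints \emph{produced} by the argument rather than prescribed in advance, and this is exactly what sidesteps the obstacle your approach runs into.
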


We remark that our estimate aims to find the correct exponent, and does not focus too much on the constant hidden in the big $O$ notation.
In some recent developments, 
people want to  more carefully understand the dependence of the constant on the graph. For example, Bukh and Jiang showed in \cite{bukh2017bound} 
that $\text{ex}(n,C_{2k})$ is upper bounded by $80 \sqrt k \log k \ n^{1+1/k}$, which was further improved by He 
in~\cite{new_C2k_bound} to 
 $\big (16 \sqrt {5k\log k} +o(1) \big) \ n^{1+1/k}$.
In the same spirit, Bukh and Tait~\cite{Bukh_theta} 
showed that for theta graphs, 
$\text{ex}(n,\theta_{k,\ell}) \leq c_k \ell^{1-1/k}n^{1+1/k}$ for some constant $c_k$ depending on $k$. 
In the upcoming project, we also intend to combine techniques from~\cite{Bukh_theta} 
with the ideas from the present paper to give a more precise estimate on dependence of the coefficient on the path lengths in a generalized theta graph.  

After Theorem~\ref{main}, one can raise natural questions for matching lower bounds of the new family of graphs we have considered. 
Notice that, towards the very difficult problem of finding matching lower bound of 
$\text{ex}(n,C_8)$, Verstra{\"e}te and Williford established in a recent paper~\cite{verstraete2019graphs} that 
$\text{ex}(n,\Theta_{4,4,4})= \Omega(n^{5/4})$. 
Observing a recent construction in~\cite{Conlon_cycles_C4C6C10} by Conlon, which in turn was a rephrasing of an algebraic construction 
by Wenger in~\cite{wenger1991extremal},
here we give a quick proof of matching lower bound for a very similar graph $\Theta_{3,5,5}$, and  establish the following.
\begin{theorem}\label{Thm335} $\text{ex}(n,\Theta_{3,5,5})=\Theta(n^{5/4})$. \end{theorem}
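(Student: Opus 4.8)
The upper bound $\text{ex}(n,\Theta_{3,5,5})=O(n^{5/4})$ is immediate from Theorem~\ref{main}: the lengths $3,5,5$ all have the same parity, $1$ does not appear, and $k^\ast=\tfrac12\min(3+5,3+5,5+5)=\tfrac12\cdot 8=4$, so $\text{ex}(n,\Theta_{3,5,5})=O(n^{1+1/4})=O(n^{5/4})$. It therefore remains to produce a graph on $n$ vertices with $\Omega(n^{5/4})$ edges containing no copy of $\Theta_{3,5,5}$. The plan is to borrow the $C_{10}$-free construction from~\cite{Conlon_cycles_C4C6C10} (a polarity-type incidence graph built from a generalized polygon / suitable algebraic variety over $\mathbb{F}_q$, with $n\asymp q^5$ vertices and $\asymp q^6\asymp n^{6/5}$... wait, one must take the construction tuned to exponent $5/4$, i.e. the $C_{10}$-type bipartite incidence construction on $n\asymp q^4$ vertices with $\asymp q^5\asymp n^{5/4}$ edges and girth $>10$). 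The key point is that such a graph $G$ not only has no $C_{10}$ but in fact has girth at least $8$, or better, has no $C_{10}$ and no $C_8$ and no $C_6$ — one should check from the construction which short cycles are actually forbidden.

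The main structural observation is the following. Suppose $G$ contains a copy of $\Theta_{3,5,5}$, with apexes $p,p'$ joined by internally disjoint paths $P_3,P_5,P_5'$ of lengths $3,5,5$. Then $P_3\cup P_5$ is a cycle of length $8$, $P_3\cup P_5'$ is a cycle of length $8$, and $P_5\cup P_5'$ is a cycle of length $10$. Hence any graph that is simultaneously $C_8$-free (or even just $C_8$-free) contains no $\Theta_{3,5,5}$, because the first of these cycles already cannot exist. So the strategy reduces to: exhibit a graph with $\Omega(n^{5/4})$ edges and no $C_8$. But $\text{ex}(n,C_8)$ is not known to be $\Omega(n^{5/4})$ — in fact the $C_8$ lower bound is exactly the notorious open problem mentioned in the introduction — so a purely $C_8$-free construction of the right density is not available. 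Instead I would use the weaker, sufficient condition that $G$ contains no $\Theta_{3,5,5}$, which requires only that $G$ have no $C_{10}$ together with the absence of the specific configuration of two $8$-cycles sharing a path of length $3$. Concretely: take the $C_{10}$-free construction $G$ of~\cite{Conlon_cycles_C4C6C10} with $\Theta(n^{5/4})$ edges, and verify that the extra rigidity of that algebraic construction (e.g.\ that any two vertices lie on at most one short path, or that $8$-cycles through a fixed edge are highly constrained) rules out $\Theta_{3,5,5}$; alternatively pass to a subgraph obtained by a standard deletion argument removing one edge from each $C_8$, checking that not too many $C_8$'s pass through a typical edge so that the edge count drops only by a constant factor.

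The step I expect to be the main obstacle is precisely this verification that the chosen $C_{10}$-free construction excludes $\Theta_{3,5,5}$ without destroying the edge density: one must either (a) count, in the incidence graph of~\cite{Conlon_cycles_C4C6C10}, how many copies of $C_8$ (or of $\Theta_{3,5,5}$ directly) pass through a fixed edge and show this is $O(1)$ on average, so that a one-edge-per-copy deletion loses only a constant proportion of the $\Theta(q^5)$ edges; or (b) appeal to a girth-type property — if the construction can be arranged to have girth $\geq 8$ while keeping $\Omega(n^{5/4})$ edges then we are done trivially, but that again bumps against the $C_8$ barrier, so route (a) via careful counting in the algebraic model (using that solutions to the defining equations over $\mathbb{F}_q$ are controlled) is the realistic path. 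I would organize the write-up as: (1) reduce $\Theta_{3,5,5}$-freeness to a cycle condition; (2) recall the construction and its cycle-counting properties from~\cite{Conlon_cycles_C4C6C10}; (3) bound the number of relevant copies through an edge and conclude by deletion; (4) combine with Theorem~\ref{main} for the matching upper bound.
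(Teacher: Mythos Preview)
Your upper bound is fine. The gap is in the lower bound: you never identify the mechanism that makes the construction $\Theta_{3,5,5}$-free, and your proposed fallbacks do not work as stated.

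The paper uses the point--line incidence graph over $\mathbb{F}_q^4$ with direction set $\{(1,z,z^2,z^3):z\in\mathbb{F}_q\}$, giving $n=2q^4$ vertices and $q^5\asymp n^{5/4}$ edges. This graph is \emph{not} $C_8$-free; it contains many $8$-cycles, and in fact roughly $q^8$ of them (choose a point, two directions, and two nonzero step-lengths), i.e.\ about $q^3$ copies per edge. So your route (a), deleting one edge per $C_8$, would destroy essentially all edges; the ``$O(1)$ on average'' hope is false by a factor of $q^3$. Route (b) is, as you note yourself, equivalent to the open $C_8$ problem.

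What actually works is a structural lemma about the $8$-cycles that \emph{do} occur: in any $C_8=p_1\ell_1p_2\ell_2p_3\ell_3p_4\ell_4$ in this graph, the Vandermonde relation $\sum a_i(1,z_i,z_i^2,z_i^3)=0$ forces opposite lines to be parallel, i.e.\ $\ell_1\parallel\ell_3$ and $\ell_2\parallel\ell_4$. Now suppose $\Theta_{3,5,5}$ embeds, with the length-$3$ path written $p\,\ell\,p'\,\ell'$. Each length-$5$ path $P_i$ ($i=1,2$) together with this $3$-path forms a $C_8$, and in that $C_8$ the line $\ell'$ sits opposite the second vertex of $P_i$, call it $\ell_i$. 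The lemma gives $\ell_1\parallel\ell'\parallel\ell_2$; but $\ell_1\neq\ell_2$ and both pass through $p$, contradicting that distinct parallel lines are disjoint. Hence the construction is already $\Theta_{3,5,5}$-free with no deletion needed. This is precisely the ``extra rigidity'' you gestured at, but you need to name and prove it; without the parallel-opposite-sides lemma the argument does not close.
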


In the rest of the paper, after preparing ourselves with basic notation and several lemmas in Section~\ref{notation}, the proofs of Theorem~\ref{main} and Theorem~\ref{Thm335} will be given in Section~\ref{theorem12proof} and Section~\ref{proof_335}, respectively. 

\section{Basic Notation and Useful Lemmas}\label{notation}
Write $G=\big (V(G),E(G)\big )$ for a graph $G$ with 
its vertex set $V(G)$ and edge set $E(G)$. 
 Throughout this paper, the graphs we consider are all simple, undirected and connected. 
If there is no substantial difference, 
we ignore rounding when we need a number to be an integer. 
A special notation that we will use is as follows. For a positive real number $a\geq 1$, 
we let $K_{a,1}$ denote the star graph consisting of a vertex called center, and $\lfloor a \rfloor$ other vertices joined to it.

We begin with the following classical lemma. The simple proof is provided for completeness.

\begin{lemma}\label{embedding_tree} 
Suppose $G$ is a graph on $n$ vertices with 
$|E(G)|\geq \ell n$. Then for $n\geq 2\ell+1$, $G$ contains a copy of any 
tree $T_{\ell+1}$ with $\ell+1$ vertices. 
Moreover, when $G$ is bipartite, the embedding can be done such that, 
one can prescribe a vertex in the tree and 
embed it in any preferred part in the bipartition. 
\end{lemma}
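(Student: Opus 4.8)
The plan is to reduce to a subgraph of large minimum degree and then embed the tree greedily. First I would extract from $G$ a nonempty subgraph $G'$ with minimum degree at least $\ell$. This follows from the standard deletion argument: repeatedly delete any vertex of degree at most $\ell-1$; each such deletion destroys at most $\ell-1$ edges, so after at most $n$ deletions at most $(\ell-1)n<\ell n\le |E(G)|$ edges have been removed, which means the process cannot exhaust all of $G$ and must therefore halt at a nonempty subgraph $G'$ in which every vertex has degree at least $\ell$ (in particular $|V(G')|\ge \ell+1$). I would also note that the hypothesis $n\ge 2\ell+1$ is automatic, since $\ell n\le |E(G)|\le \binom{n}{2}$ already forces $n-1\ge 2\ell$; and that if $G$ is bipartite with bipartition $A\cup B$, then $G'$ inherits a bipartition $A'\cup B'$ with both parts nonempty, because $G'$ still has an edge.

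Next I would fix a convenient ordering of the tree. Let $u$ be the prescribed vertex of $T_{\ell+1}$; root $T_{\ell+1}$ at $u$ and list its vertices $v_1=u,v_2,\dots,v_{\ell+1}$ in order of nondecreasing distance from $u$ (a breadth-first order), so that each $v_j$ with $j\ge 2$ has exactly one neighbour, call it $p(v_j)$, among $v_1,\dots,v_{j-1}$. Then I would embed $T_{\ell+1}$ into $G'$ one vertex at a time: map $v_1=u$ to any vertex of $G'$, and, when bipartition control is desired, to any vertex of the preferred part $A'$ (resp.\ $B'$), which is possible since that part is nonempty. Having embedded $v_1,\dots,v_{j-1}$ onto distinct vertices, I would embed $v_j$ onto a neighbour of the image of $p(v_j)$ that has not yet been used; such a vertex exists because the image of $p(v_j)$ has at least $\ell$ neighbours in $G'$, of which at most $j-2\le \ell-1$ are among the already-used vertices (the image of $p(v_j)$ itself is used but is not one of its own neighbours). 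Since every edge of $T_{\ell+1}$ is sent to an edge of $G'$, and every edge of $G'$ runs between $A'$ and $B'$ when $G$ is bipartite, the two colour classes of $T_{\ell+1}$ land in $A$ and $B$ in exactly the prescribed way.

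The only genuinely delicate point is the bookkeeping in the last step: the greedy embedding must be pushed through all $\ell+1$ vertices using only the bound $\delta(G')\ge \ell$, so the count of already-used neighbours of the parent's image has to be taken as $j-2$ rather than $j-1$, exploiting that the parent's image, though used, is not adjacent to itself. Everything else is routine, and the bipartite refinement requires no extra work beyond choosing the image of $v_1$ inside the desired part.
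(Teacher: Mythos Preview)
Your proof is correct and follows essentially the same approach as the paper: first extract a subgraph of minimum degree at least $\ell$ (the paper phrases this as an induction on $n$, you use the equivalent iterative deletion argument), then greedily embed the tree, placing the prescribed vertex first so that it lands in the desired bipartition class. Your write-up is in fact a bit more careful than the paper's, e.g.\ in noting that $n\ge 2\ell+1$ is forced by $|E(G)|\ge \ell n$, that both sides of the bipartition of $G'$ are nonempty, and in the $j-2$ versus $j-1$ bookkeeping for the greedy step.
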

\begin{proof} First 
\textbf{claim} that $G$ admits a subgraph $H$ whose minimal degree is at least 
$\ell$. To show the claim, we induct on $n$, with base case $n_0=2\ell+1$ such that the complete graph $K_{n_0}$ satisfies the conclusion. 
Now the induction hypothesis is that any graph on $n-1$ 
vertices with at least $\ell(n-1)$ edges admits a subgraph $H$ with minimal degree at least $\ell$. Then we look at any graph $G$ on $n$ vertices with at least $n\ell$ edges. 
If there is any vertex $v \in V(G)$ whose degree is strictly smaller than $\ell$, then we form $G'$ by deleting this vertex. Then $G'$ on $n-1$ vertices has at least $\ell(n-1)$ edges, which must contain a subgraph $H$ with minimal degree at least $\ell$ by induction hypothesis. If there is no such vertex, we are also done since $G$ already has minimal degree at least $\ell$. 
Finally, for any tree $T_{\ell+1}$ with $\ell+1$ vertices, we can greedily embed $T_{\ell+1}$ into $H$. 

It is left to check the second statement.
After we obtained the subgraph H with minimal degree at least $\ell$, 
in the final embedding process, 
we can start by embedding 
the prescribed vertex in the preferred part, 
and the rest of the process follows unchanged. 
\end{proof}
\begin{lemma}\label{extracting_stars}
Let $G$ be a bipartite graph on the vertex bipartition $V\cup W$. Suppose $|V|=m$, $|W|\geq mp$ for some $p\geq 1$.  For all $v\in V$, $\text{deg}(v) \leq Cp$ for some constant $C\geq 1$. For all $w\in W$, $\text{deg}(w)>0$. Then, there exists a subgraph which is a disjoint union of  at least $\frac{1}{C+1}m$ copies of  $K_{p/C,1}$'s, whose centers are all in $V$. 
\end{lemma}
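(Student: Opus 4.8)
The plan is to build the star forest greedily, one star at a time, while recording which vertices of $W$ have already been used. Fix an arbitrary ordering $v_1,\dots,v_m$ of $V$ and maintain a set $U\subseteq W$, initially empty. When $v_i$ is reached, check whether it has at least $d/C$ neighbours in $W\setminus U$; if so, select $d/C$ of them, output the resulting copy of $K_{d/C,1}$ centred at $v_i$, and add those $d/C$ leaves to $U$; otherwise discard $v_i$ and move on. The output stars are pairwise vertex-disjoint by construction — the centres are distinct, and the running set $U$ keeps the leaf sets disjoint — so everything reduces to showing that the number $s$ of stars produced is at least $m/(C+1)$. (Here $d/C$ is read as a fixed integer after rounding, a point which only affects constants.)

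The key step is a double count. At the end of the process $|U|=\tfrac{d}{C}\,s$. If $v_i$ was discarded, then at the moment it was processed it had fewer than $d/C$ neighbours outside the then-current $U$, hence at least $\deg(v_i)-d/C\ge d\bigl(1-\tfrac1C\bigr)$ neighbours inside it, and therefore also inside the final $U$. Counting the pairs $(v_i,w)$ with $v_i$ discarded, $w\in U$ and $v_iw\in E(G)$ from the two sides gives
\[
  (m-s)\,d\Bigl(1-\tfrac1C\Bigr)\ \le\ \sum_{w\in U}\deg(w)\ \le\ |U|\cdot\Delta_W\ =\ \tfrac{d}{C}\,s\,\Delta_W,
\]
where $\Delta_W=\max_{w\in W}\deg(w)$. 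This simplifies to $s\ge (C-1)m/(\Delta_W+C-1)$, which is at least $m/(C+1)$ as soon as $\Delta_W\le C(C-1)$.

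The one point that needs care — and the main obstacle — is the bound on $\Delta_W$: a priori a handful of vertices of $W$ could be joined to nearly all of $V$, and then the first few stars would sterilise every remaining candidate centre. This is precisely where the hypothesis $|W|\ge md$ enters. Since $\sum_{w\in W}\deg(w)=\sum_{v\in V}\deg(v)\le Cmd$, the average degree over $W$ is at most $C$, so before running the greedy procedure one first deletes from $W$ every vertex of degree exceeding a suitable constant multiple of $C$; by Markov's inequality this removes only a bounded fraction of $W$ and of the incidences, and after passing to the surviving subgraph (and, if necessary, discarding the few vertices of $V$ whose degree has dropped too far, absorbing the change into the constants) one has $\Delta_W=O(C)$, small enough for the displayed inequality. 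A clean alternative for this cleanup-plus-greedy step is a probabilistic one: let each $w\in W$ pick a uniformly random neighbour and be "owned" by it; the owned sets are automatically disjoint, each $v$ owns $\sum_{w\sim v}1/\deg(w)\ge\deg(v)/\Delta_W$ vertices in expectation, and a reverse-averaging argument extracts the required number of vertices of $V$ that own a set of size at least, say, $d/(2C)$. Either way, the remaining work is only the bookkeeping of constants and of the rounding of $d/C$, none of which affects the stated conclusion.
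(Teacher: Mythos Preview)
Your double count is fine, but the cleanup you propose to force $\Delta_W=O(C)$ does not work. Markov's inequality bounds the fraction of \emph{vertices} of $W$ with degree above $kC$; it gives no control on the fraction of \emph{edges} incident to those vertices, and that fraction can be essentially $1$. Concretely, split $V$ in half; let every $v$ in the first half have all of its $d$ neighbours inside a fixed set $W_0\subset W$ of size $d$, and let the second half of $V$ send its (up to $Cd$) edges into $W\setminus W_0$ so that those $w$'s each have degree $1$ or $2$ (possible since $|W\setminus W_0|\approx md$). Every vertex of $W_0$ then has degree $m/2$, so your cleanup deletes $W_0$, whereupon the entire first half of $V$ drops to degree $0$ --- not ``a few vertices'' to be absorbed into constants. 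Your probabilistic alternative inherits the same defect, since you again lower-bound the expected owned-set size by $\deg(v)/\Delta_W$ and therefore still need $\Delta_W=O(C)$.

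The paper's greedy sidesteps $\Delta_W$ entirely. At each step it picks some $v$ with at least $d/C$ live neighbours, records a star, and then deletes \emph{all} of $v$'s neighbours from $W$, not just the $d/C$ chosen leaves. Since $\deg(v)\le Cd$, after $k$ steps at least $|W|-kCd\ge(m-Ck)d$ vertices of $W$ survive, each still adjacent to one of the $m-k$ remaining vertices of $V$; pigeonhole then yields some $v$ with at least $(m-Ck)d/(m-k)\ge d/C$ live neighbours whenever $k\le m/(C+1)$. Thus $|W|\ge md$ is used as a direct size lower bound on the surviving part of $W$, not via the average degree of $W$. (Your random-ownership idea can in fact be repaired along the same lines: the owned sets partition the non-isolated part of $W$, so $\sum_v X_v\ge md$ deterministically, and combining this with $X_v\le Cd$ already forces at least $m/(C+1)$ vertices to satisfy $X_v\ge d/C$ --- but that is not the argument you wrote.)
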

\begin{proof} 
The proof is by a  simple greedy algorithm.
We start by choosing a vertex $v_0\in V$ and finding a copy of $K_{p/C,1}$ centered at $v_0$. 
Then we delete both $v_0$ and all the vertices adjacent to $v_0$, 
and obtain a new bipartite graph called $G_1$ on the bipartition $V_1\cup W_1$. 
Note the number of vertices in $W_1$ is more than $(m-C)p$, each of which 
is adjacent to some vertex of $V_1$. 
There are 
$(m-1)$ vertices in $V_1$.
As long as $m\geq C+1$, and thus $(m-1)\frac{p}{C} \leq (m-C) p$, 
it follows that there exists at least one vertex $v_1\in V_1$, whose degree in $W_1$ is at least  $\frac{p}{C}$.

Inductively, suppose in $G$ we have found a union of $k$ copies of $K_{p/C,1}$ centered at vertices $v_0,v_1,\cdots,v_{k-1}\in V$, then we delete all these vertices in $V$, obtaining vertex set $V_k$, and then delete from $W$ all the vertices which are adjacent to at least one of $v_j, j=0,\cdots,k-1$, obtaining the new vertex set $W_k$. This gives a new bipartite graph $G_k$. Now, the size of $W_k$ is at least $(m-Ck)p$, 
and there are exactly $m-k$ vertices in $V_k$. 
As long as $k\leq \frac{1}{C+1}m$, and thus $(m-k)\frac{p}{C}\leq (m-Ck) p$, 
we can find one vertex $v_k\in V_k$, whose degree in $W_k$ is at least $\frac{p}{C}$. 
The process stops only when $k>\frac{1}{C+1}m$, when we have already embedded the subgraph we wanted. 
\end{proof}

We will need the following well known reduction lemma, 
 which provides a subgraph with a sufficiently large vertex set for which the degree of every vertex is under control. 
 \begin{lemma}\label{degree_control}[Lemma 5 of~\cite{Bukh_theta}. See also Proposition 2.7 of~\cite{Jiang2012turan} and Theorem 1 of~\cite{erdos1965some}] 
For any $\alpha>0$, there exist $\varepsilon_0 >0$ and $C_0>1$, such that, any 
graph $G$ with $|V(G)|=n$ and 
$|E(G)|= K n^{1+\alpha}$ contains a subgraph $H$ such that
 $|V(H)|=m =\Omega(n^{\varepsilon_0})$ and every vertex $v\in V(H)$ has degree 
 $\text{deg}_{H}(v)\in [\frac {1}{C_0} K m^{\alpha}, C_0 K m^{\alpha}]$.
\end{lemma}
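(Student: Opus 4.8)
The plan is to run the standard degree-regularization procedure behind the cited references, whose structure I outline. It suffices to produce a subgraph $H$ on $m=\Omega(n^{\varepsilon_0})$ vertices that is \emph{almost regular}, meaning $e(H)=\Theta(Km^{1+\alpha})$ and $\Delta(H)\le C\,\delta(H)$ for an absolute constant $C$: for such an $H$ one has $\bar d(H):=2e(H)/m=\Theta(Km^{\alpha})$, and since $\delta(H)\ge\Delta(H)/C\ge\bar d(H)/C$ while $\Delta(H)\le C\,\delta(H)\le C\,\bar d(H)$, every degree of $H$ lies in a window $[\tfrac1{C_0}Km^{\alpha},\,C_0Km^{\alpha}]$ once the constants are absorbed into $C_0$.

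For the minimum degree I would use the classical ``peeling'' step: as long as the current graph $F$ has a vertex $v$ with $\deg_F(v)<e(F)/|V(F)|$, delete it. The ratio $e(F)/|V(F)|$ never decreases and starts at $Kn^{\alpha}$, so the process halts at a nonempty $G_1$ with $\delta(G_1)\ge e(G_1)/|V(G_1)|\ge Kn^{\alpha}$; writing $n_1:=|V(G_1)|\le n$ this gives $\delta(G_1)\ge Kn_1^{\alpha}$, and combining $e(G_1)\ge\tfrac12 n_1\cdot Kn^{\alpha}$ with $e(G_1)\le\binom{n_1}{2}$ forces $n_1>Kn^{\alpha}$ --- already a polynomial lower bound. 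So after this phase the minimum degree is controlled relative to the current order, and only the maximum degree remains.

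For the maximum degree I would iterate the following move, maintaining the invariants ``$\delta(F)\ge\tfrac1{C_0}K|V(F)|^{\alpha}$ and $e(F)\ge\tfrac1{C_0}K|V(F)|^{1+\alpha}$'' (true for $F=G_1$), stopping as soon as $\Delta(F)\le C_0K|V(F)|^{\alpha}$. Partition $V(F)$ into the $O(\log|V(F)|)$ dyadic degree classes $V_j=\{v:2^j\le\deg_F(v)<2^{j+1}\}$; some class $U=V_{j^\ast}$, of size $u$ and dyadic level $d=2^{j^\ast}$, carries an $\Omega(1/\log|V(F)|)$ fraction of the edge-endpoints, so $ud=\Omega\big(e(F)/\log|V(F)|\big)$ and all of $U$ has $F$-degree within a factor $2$ of $d$. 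Crucially one does not restrict to $U$ alone (which may be edgeless, e.g.\ for a ``book'' graph) but to the bipartite graph between $U$ and the set $W$ of the far endpoints of the edges leaving $U$; a short count gives $|W|\ge d$, this bipartite graph has at least $ud$ edges with all $U$-degrees pinched, and using Lemma~\ref{extracting_stars} (or plain double counting) one finds inside it an almost-regular subgraph --- thinning it by greedily deleting edges at high-degree vertices if it is still too dense for its order, so that its normalized density returns to $\Theta(K)$ while a constant fraction of its edges survive. The output $F'$ again satisfies the invariants, with $|V(F')|$ at least a fixed power of $|V(F)|$; a bookkeeping argument on these quantities shows only $O(1)$ such rounds occur, so the terminal $H$ has $m:=|V(H)|=\Omega(n^{\varepsilon_0})$ with $\tfrac1{C_0}Km^{\alpha}\le\delta(H)$ and $\Delta(H)\le C_0Km^{\alpha}$, completing the proof via the reduction above.

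The hard part is entirely this last phase: choosing the right piece to recurse on (the naive ``densest dyadic class'' can span no edges, and the globally densest subgraph can be of constant order and far from regular), and then showing \emph{quantitatively} that only boundedly many rounds are needed and each loses at most a fixed polynomial factor in the number of vertices, so that $\varepsilon_0$ and $C_0$ end up depending on $\alpha$ only. This quantitative step is exactly what is carried out as Lemma 5 of \cite{Bukh_theta} (see also \cite{Jiang2012turan,erdos1965some}), and I would invoke it in that form.
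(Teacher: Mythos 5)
The paper does not prove Lemma~\ref{degree_control}; it is stated as a known reduction and cited from \cite{Bukh_theta}, \cite{Jiang2012turan}, and \cite{erdos1965some}, so there is no in-paper proof for your sketch to be compared against. Your outline captures the standard skeleton (peel low-degree vertices to control $\delta$; dyadic pigeonholing of degree classes plus a bipartite-neighbourhood restriction to attack $\Delta$; recurse), but the crucial step is left open. You assert that ``a bookkeeping argument on these quantities shows only $O(1)$ such rounds occur,'' yet you name no monotone quantity that forces the recursion to halt. Thinning so that the ``normalized density returns to $\Theta(K)$'' each round is self-defeating for this purpose: if the density exponent $\alpha$ is held fixed from round to round, there is nothing preventing the iteration from running an unbounded number of times, losing a polynomial factor in the vertex count each round and destroying the claim $m=\Omega(n^{\varepsilon_0})$. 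In the actual arguments of \cite{Jiang2012turan,Bukh_theta,erdos1965some}, termination is driven by the observation that whenever the current graph is far from almost-regular one can pass to a polynomially large subgraph whose density \emph{exponent increases by a fixed amount}; since that exponent is trapped below $1$, the number of rounds is bounded in terms of $\alpha$ alone, which is precisely what makes $\varepsilon_0$ and $C_0$ depend only on $\alpha$. You yourself flag this as ``the hard part'' and then conclude by invoking Lemma 5 of \cite{Bukh_theta} --- i.e.\ by citing the very statement to be proved. As it stands your write-up is a correct informal summary plus a citation, which matches how the paper treats the lemma, but it is not a self-contained proof.
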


\section{Proof of the Theorem~\ref{main}}\label{theorem12proof}
Hereafter, the integer $n$ is always considered to be sufficiently large. The proof of Theorem~\ref{main} reduces to the following proposition. 
\begin{prop}\label{proposition_general} 
For any $C_0>1$ there exists $M>0$ with the following property. 
Let $G$ be a $\Theta_{k_1,\cdots,k_\ell}$-free bipartite graph, for which the degree of every vertex belongs to the interval $[\frac{1}{C_0}n^{1/k^\ast},C_0n^{1/k^\ast}]$. 
Then $|V(G)| \geq \frac {1}{M} n +1$.
\end{prop}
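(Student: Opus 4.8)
The plan is to argue by contradiction: assume $|V(G)| < \frac{1}{M}n + 1$ for a large constant $M$ to be chosen, and produce a copy of $\Theta_{k_1,\dots,k_\ell}$ inside $G$. Write $m = |V(G)|$, so every vertex has degree in $[\frac{1}{C_0}n^{1/k^\ast}, C_0 n^{1/k^\ast}]$ with $m \le \frac{1}{M}n$; hence $n^{1/k^\ast} \ge (Mm)^{1/k^\ast}$, which means every vertex has degree at least $c\, m^{1/k^\ast}$ with $c = M^{1/k^\ast}/C_0$ as large as we like by choosing $M$ large. So the real content is: a bipartite graph on $m$ vertices with minimum degree (and maximum degree) of order $Dm^{1/k^\ast}$, with $D$ huge, must contain $\Theta_{k_1,\dots,k_\ell}$. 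Relabel the parts as $A \cup B$ with, say, $|A| \le m/2$.

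The core combinatorial step is a breadth-first-search / neighborhood-expansion argument of the type used for even cycles. Order $k_1 \le k_2 \le \cdots \le k_\ell$, so $k^\ast = \frac{1}{2}(k_1+k_2)$. Fix a vertex $p$. First I would build, greedily, a system of internally disjoint paths out of $p$: using the high minimum degree, grow $\ell$ vertex-disjoint ``branches'' from $p$, a BFS tree in which each branch extends to depth roughly $\lceil k_\ell/2\rceil$; because $D$ is large and $m$ is only polynomially large in terms of the degree, at each expansion step the number of new vertices reachable multiplies by a factor $\gg 1$ even after discarding the (polynomially bounded) set of previously used vertices — this is exactly where the hypothesis $m \le n$ and $\mathrm{deg} \asymp n^{1/k^\ast}$, i.e. $\mathrm{deg}^{k^\ast} \gg m$, gets used. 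The upshot is that from $p$ one can reach, along internally disjoint paths, $\ell$ large ``spheres'' $S_1^{(i)}, \dots$ at prescribed distances. One then does the symmetric expansion from a second vertex $p'$ chosen in the appropriate part (parity is what makes all $k_i$ land the two endpoints on the correct sides, so the bipartite structure is compatible), and shows the expanded neighborhoods from $p$ and from $p'$ must overlap at the right radii to close up $\ell$ internally disjoint $p$–$p'$ paths of the exact lengths $k_1,\dots,k_\ell$. Lemma~\ref{embedding_tree} is the tool that lets us embed the ``stubs'' (short trees) with a prescribed root vertex on a prescribed side, and Lemma~\ref{extracting_stars} lets us find many disjoint stars to keep the branches from colliding.

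More precisely, I expect the cleanest route is inductive on $\ell$: having found $\ell-1$ internally disjoint $p$–$p'$ paths of lengths $k_1,\dots,k_{\ell-1}$ using only $O(1)$ vertices, delete those internal vertices (a bounded set, so minimum degree drops only by $O(1)$ and stays $\gg 1$), and then find one more $p$–$p'$ path of length $k_\ell$ avoiding them. A single path of bounded length between two prescribed vertices in a graph of huge minimum degree and comparatively few vertices is exactly the even-cycle / $\theta$-graph BFS argument (grow from $p$ to radius $\lfloor k_\ell/2\rfloor$, grow from $p'$ to radius $\lceil k_\ell/2\rceil$, intersect); the parity hypothesis guarantees a path of the exact desired length exists once the spheres meet. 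The base case $\ell = 2$ is $\Theta_{k_1,k_2}$, whose shortest cycle has length $k_1 + k_2 = 2k^\ast$, and there one uses $\mathrm{deg}^{k^\ast} \gg m$ directly à la Bondy–Simonovits.

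The main obstacle is bookkeeping the disjointness: when I grow the $i$-th branch I must avoid all vertices used by the previous $i-1$ paths \emph{and} all vertices used by the partially-built current branches from both $p$ and $p'$, and I must ensure the two halves of the $i$-th path meet in a vertex not yet claimed. The saving grace is quantitative: everything already used is a set of size $O(D^{\,k_\ell/k^\ast})$ or so — a fixed power of the degree — whereas one more expansion step produces $\Omega(D m^{1/k^\ast})$ candidates, and since $m^{1/k^\ast}$ dominates any fixed power of $D$ once $D = M^{1/k^\ast}/C_0$ is chosen after fixing $\ell$ and the $k_i$'s, there is always room. So the proof is really a careful ordering of the growth steps together with one clean counting inequality of the form ``new vertices at this step $\ge$ (expansion factor) $\times$ (old count) $-$ (forbidden set), and the right-hand side is still positive''; choosing $M$ large at the very end absorbs all the constants. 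I would also remark that the $+1$ in $|V(G)| \ge \frac1M n + 1$ is a harmless normalization (it just rules out the degenerate tiny cases and matches the statement of Lemma~\ref{embedding_tree}).
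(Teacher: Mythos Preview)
Your induction on $\ell$ has a real gap at the step you call ``exactly the even-cycle / $\theta$-graph BFS argument.'' Bondy--Simonovits (and Faudree--Simonovits) locate \emph{some} short even cycle by growing a BFS tree from a single root and detecting a collision; the two antipodal vertices of that cycle are \emph{output} of the argument, not input. What you need in your inductive step is different: $p$ and $p'$ are already fixed (by the base case), and you must exhibit a $p$--$p'$ path of \emph{exact} length $k_\ell$ avoiding a prescribed $O(1)$-set. Growing spheres from $p$ and from $p'$ and asserting that they intersect is a heuristic (the product of their sizes exceeds $m$), not a proof: nothing in the hypotheses forces the sphere $S_{\lfloor k_\ell/2\rfloor}(p)$ to meet $S_{\lceil k_\ell/2\rceil}(p')$, and even if they do meet at some $w$, the geodesics $p\to w$ and $p'\to w$ may be shorter than the radii you want, giving a path of the wrong length. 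The closing counting inequality you sketch controls only how large each sphere is, not where it sits; that is precisely the hard part, and it is why the even-cycle literature does not argue this way.

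The paper's route is structurally different. It never fixes two endpoints. It picks a single root $r$, writes $V(G)$ in BFS layers $L_0,\dots,L_{k^\ast}$, and shows inductively that $G$ contains a ``regular almost-tree'' of type $(d,s+1)$ with $d\asymp n^{1/k^\ast}$, for $s$ running up to $k^\ast-1$; at $s=k^\ast-1$ this forces $|L_{k^\ast}|\gtrsim n$ and hence $|V(G)|\ge n/M+1$. The inductive step has two regimes. For $s\le k_1$ (Lemma~\ref{first_type_induction}) one shows that the set of vertices in $L_{s+1}$ with many parents is sparse: otherwise one can embed all of $\Theta_{k_1,\dots,k_\ell}$ as a small tree inside $G[L_s\cup L_{s+1}]$ whose $\ell$ leaves sit in distinct subtrees and hence extend by disjoint root-paths of length $s$. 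For $k_1<s<k^\ast$ this fails (the paths $P_i$ are now too long to all reach $r$), and a second mechanism is needed: the thick/thin dichotomy of Lemma~\ref{thick_thin_discussion}, which embeds a forest $\Gamma(s)$ of short paths inside $H[L_s\cup L_{s+1}]$ with endpoints landing in distinct $D_j$'s, using the regular almost-tree structure to guarantee the disjoint extensions. This two-regime argument, together with the regular-tree scaffolding (Lemma~\ref{regular_tree}) that keeps all subtrees the same size, is what is missing from your sketch.
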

\begin{proof}[Proof of Theorem~\ref{main} using Proposition~\ref{proposition_general}]
For $\alpha=\frac {1}{k^\ast}$, 
Lemma~\ref{degree_control} produces constants $C_0$ and $\varepsilon_0$, and Proposition~\ref{proposition_general} produces the constant $M$.
Suppose for contradiction that 
for some sufficiently large $n$,  a graph $G$ on $n$ vertices has more than 
$2M^{1/k^\ast}n^{1+1/k^\ast}$ edges. 
Then $G$ contains a bipartite subgraph $G'$ 
with more than 
$M^{1/k^\ast}n^{1+1/k^\ast}$ edges. 
By Lemma~\ref{degree_control}, a subgraph $H$ of $G'$ on 
$m \gtrsim  n^{\varepsilon_0}$ vertices satisfies that  each vertex in $H$ 
has degree lying in the interval $[\frac{1}{C_0} (M m)^{1/k^\ast}, C_0 (M m)^{1/k^\ast}]$. Applying Proposition~\ref{proposition_general}, one obtains that $|V(H)| \geq m+1$, which is a contradiction. 
\end{proof}

In the rest of this section, we prove Proposition~\ref{proposition_general}.
To make the exposition clearer, we split this proof into three subsections. 
\subsection{Preliminary Considerations and Setup of the Proof}
Recall Definition~\ref{general_theta_graph} and fix 
the graph 
$\Theta_{k_1,\cdots,k_\ell}$, which consists of two fixed vertices,  namely $w$ and $w'$,
connected by $\ell$ internally disjoint paths 
of lengths $k_1,\cdots,k_\ell$ (edge numbers), respectively. 
Up to reordering the indices, we can simply assume $k_1\leq k_2\leq \cdots \leq k_\ell$, 
and thus $k^\ast$ in Theorem \ref{main} can be written as $k^\ast=(k_1+k_2)/2$. Note if $k_1=k_2$, then the conclusion follows from the main result of \cite{Faudree1983} (i.e. Theorem 2). So we assume $k_1<k^\ast< k_2$.

Suppose $G$ is a connected 
bipartite graph and $r\in V(G)$ is arbitrarily chosen and fixed as \textbf{the root}. 
We will write $L_G^r(i)$ for the set of vertices which have distance $i$ with the root $r$. When there is no confusion about the host graph $G$ and/or root $r$, we can simplify as $L_G^r(i)=L_G(i)=L(i)$. In particular, $L(0)=\{r\}$. 
For any vertex $v\in L(i), u\in L(i+1)$, 
if $u$ and $v$ are adjacent, we call $u$ a {\bf child} of $v$ and $v$ a {\bf parent} of $u$.
For $u\in L(j)$ and $v\in L(i)$ with $j>i$, 
$u$ is called a \textbf{descendant} of $v$ 
if their distance is $j-i$. In this case, 
$v$ is an \textbf{ancestor} of $u$. 
We further make the following definition. 
\begin{definition}\label{regular_almost_tree} 
Given integers $1\leq s\leq k$ and  real number $d>0$, 
a bipartite graph $G$ with a root $r \in V(G)$ and layers $L(j), j=1,\cdots,k$,
is  said to \textbf{restrict to a regular almost-tree of type 
$(d,s)$ (with respect to the root r)}, if the following hold.
\begin{enumerate}
\item every $v\in \bigcup_{j=0}^{s-1}L(j)$ has exactly  $\lfloor d\rfloor$ children, and each vertex $v' \in \bigcup_{j=1}^{s-1}L(j)$ has exactly one parent. 
\item for any $v_1\in L(1)$, $G[\{r\} \cup \bigcup_{j=1}^{s-1} L(j)]$ is 
isomorphic with 
 $G^{v_1,s}$, where  $G^{v_1,s}$ is the induced subgraph  of $G$ 
by $v_1$ and all its descendants until the layer $L(s)$. 
\end{enumerate}
If further every vertex $v\in L(s)$ also  has only one parent, then we say the graph $G$ \textbf{restricts to a regular tree of type} $(d,s)$.
\end{definition}

The following lemma is useful to ``grow a regular tree" into higher layers.
We postpone its proof 
to the appendix due to its elementary nature. 
\begin{lemma}\label{regular_tree} 
For any $C_0,\ C_1>1$, there exists a constant  $K$ depending on $C_0$ and $C_1$ such that the following holds. 
Let $1\leq s<k$, and let $n$ be sufficiently large and $d=\frac{1}{C_0}n^{1/k}$. 
Suppose a bipartite graph $G$ has a root $r\in V(G)$ and the corresponding layers $L(j),j=1\, \cdots,k$, 
satisfying the following conditions. 
\begin{enumerate} 
\item[(A)]  $G$ restricts to a regular tree of type 
$(d,s)$.
\item[(B)] for any $v\in L(s)$, the number of children of $v$ in $L(s+1)$ 
belongs to the interval $[d,C_0^2d]$.
\item[(C)] the induced bipartite subgraph 
 $H=G[L(s)\cup L(s+1)]$ satisfies
\begin{equation}
|E(H)| \leq C_1 |V(H)|.
\end{equation}
\end{enumerate}
Then $G$ has a subgraph 
$G^\ast$  which restricts to a regular tree of type $(\frac {1}{K} d,s+1)$.
\end{lemma}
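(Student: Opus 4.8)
The plan is to deal with the single genuine obstruction to pushing the regular tree one layer further: a vertex of $L_{s+1}$ may be a child of several vertices of $L_s$. I will first pass to a subgraph in which the retained vertices of $L_{s+1}$ have a \emph{unique} parent while a positive proportion of $L_s$ still keeps $\Omega(d)$ children, and then re‑inflate the tree on the top $s$ layers so that \emph{every} internal vertex again has the same number of children. The case $s=0$ is immediate (just choose $\lfloor d/K\rfloor$ children of $r$), so assume $s\ge 1$.

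\textbf{Steps 1--2 (control multiplicities, extract private stars).} Write $H=G[L_s\cup L_{s+1}]$. From (A)--(B) we have $|L_s|=d^s$ and $d^{s+1}\le |E(H)|\le C_0^2 d^{s+1}$, and then (C) forces $|L_{s+1}|\ge |E(H)|/C_1-d^s\ge d^{s+1}/(2C_1)$ once $d\ge 2C_1$; in particular a vertex of $L_{s+1}$ has on average at most $2C_1$ parents. Put $m':=\min(\lfloor |L_{s+1}|/d\rfloor,\,d^s)$, so $d^s/(4C_1)\le m'\le \min(|L_{s+1}|/d,\,|L_s|)$ for $d$ large, and pick any $V'\subseteq L_s$ with $|V'|=m'$. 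By (B) every $v\in V'$ has between $d$ and $C_0^2 d$ neighbours in $L_{s+1}$, and $|L_{s+1}|\ge m'd$, so Lemma~\ref{extracting_stars} (with $C=C_0^2$, applied to $G[V'\cup L_{s+1}]$) produces pairwise disjoint copies of $K_{d/C_0^2,1}$ centred at a set $S\subseteq V'$ with $|S|\ge m'/(C_0^2+1)\ge \alpha d^s$, where $\alpha:=\tfrac1{4C_1(C_0^2+1)}$. For $v\in S$ let $P_v\subseteq L_{s+1}$ be the $\lfloor d/C_0^2\rfloor$ leaves of the star at $v$; the $P_v$ are pairwise disjoint and consist of children of $v$.

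\textbf{Step 3 (re‑inflate the tree on layers $0,\dots,s$).} Inside the regular tree $T=G[\{r\}\cup\bigcup_{i=1}^s L_i]$ of type $(d,s)$, assign to each $v$ of level $j\le s$ the weight $w(v):=|S\cap L_s^v|/d^{\,s-j}\in[0,1]$; then $w(r)\ge\alpha$, $w(v)=\mathbf{1}[v\in S]$ for $v\in L_s$, and $w(v)=\tfrac1d\sum_{c}w(c)$ over the $d$ children $c$ of $v$ when $j<s$. Call $v$ of level $j$ \emph{good} if $w(v)\ge\alpha/2^j$. Then $r$ is good, and a good internal vertex of level $j$ must have at least $\tfrac{\alpha}{2^{j+1}}d$ good children, since otherwise $w(v)<\tfrac{\alpha}{2^{j+1}}+\tfrac{\alpha}{2^{j+1}}=\tfrac{\alpha}{2^j}$. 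A good vertex of level $s$ lies in $S$, so I can build, top‑down from $r$ and selecting $d^\dagger:=\lfloor(\alpha/2^s)d\rfloor$ good children at each internal vertex, a regular tree of type $(d^\dagger,s)$ all of whose leaves lie in $S$; then at each such leaf $v$ attach any $d^\dagger$ of its $\ge\lfloor d/C_0^2\rfloor\ge d^\dagger$ private children in $P_v$. Disjointness of the $P_v$ makes every new layer‑$(s+1)$ vertex have a unique parent, so the result is a regular tree of type $(d^\dagger,s+1)$ with $d^\dagger\ge(\alpha/2^{s+1})d\ge d/K$ for $K:=2^{k+2}C_1(C_0^2+1)$, as required.

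\textbf{Main obstacle.} The delicate part is Steps 1--2: condition (C) is exactly what keeps the typical parent‑multiplicity in $L_{s+1}$ bounded so that Lemma~\ref{extracting_stars} applies, but that lemma only privatizes the children of a constant fraction of $L_s$. The real work is therefore the weight‑propagation argument of Step 3, which rebuilds a genuinely regular tree out of this fraction at the cost of a factor that grows (only) with $s$; keeping all the rounding and ``$d$ large'' conditions consistent across the three steps is the routine but fiddly bookkeeping.
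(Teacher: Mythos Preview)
Your proof is correct and follows essentially the same strategy as the paper: use condition~(C) together with Lemma~\ref{extracting_stars} to extract $\Omega(d^s)$ vertex-disjoint stars of size $\Omega(d)$ between $L_s$ and $L_{s+1}$, and then rebuild a regular tree on the first $s$ layers whose leaves are star-centres. The only cosmetic difference is that the paper rebuilds \emph{bottom--up} (a pigeonhole step showing that if $\Omega(d^t)$ vertices survive in layer $t$ then $\Omega(d^{t-1})$ vertices in layer $t-1$ retain $\Omega(d)$ surviving children), whereas you rebuild \emph{top--down} via the weight $w(v)$ and the ``good vertex'' propagation; the two arguments are dual and yield the same constant loss of order $2^{O(s)}C_1C_0^{O(1)}$.
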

\begin{proof}See Appendix.
\end{proof}

Hereafter, let $G$ be a bipartite and $\Theta_{k_1,\cdots,k_\ell}$-free graph and for every vertex $v\in G$, $\text{deg}(v)\in [\frac{1}{C_0}n^{1/k^\ast},C_0n^{1/k^\ast}]$. 

\begin{definition}\label{badset_definition} 
For all $i=1,\cdots,k^\ast-1$, we define
$\mathfrak B^{(i)}$ as the set of vertices in $L_G(i)$ which have at least 
$\frac {1}{2C_0}n^{1/k^\ast}$ parents in $L_G(i-1)$. 
\end{definition}

We define each set $\mathfrak B^{(i)}$ in the original graph $G$. These sets can be taken as the first type of ``bad sets". For each $i=1,\cdots,k^\ast-1$, by the degree condition, there are at most 
$C_0^{i}n^{i/k^\ast}$ edges in the induced subgraph $G[L(i)\cup L(i-1)]$. 
Therefore one has the trivial bound
\begin{equation}\label{trivial_bound_of_B}
|\mathfrak B^{(i)}| \leq 2C_0^{i+1} n^{\frac{i-1}{k^\ast}}.
\end{equation}

The general idea of Subsections 3.2 and 3.3 is as follows. We will define several kinds of ``bad sets", and prove that their sizes are small compared to the corresponding layer, so that we can delete them to obtain bigger and bigger regular almost-trees until $k^\ast-1$ layers and derive a contradiction.
In particular, in Subsection 3.2, we will define the second kind of ``bad sets", which are vertices with many children fallen in $\mathfrak B^{(i)}$. This is the first part of the induction step, where we deal with the layers $L(i)$ for $i\leq k_1+2$.
Later in Subsection 3.3, we will define the third kind of ``bad sets", which consist of so-called thick vertices. We will do the second part of induction with the layers $L(i)$ for $k_1+2\leq i \leq k^\ast-1$.  

\subsection{First Part of Induction Step}
The following lemma is useful when we need to repeatly check condition $(C)$ in Lemma \ref{regular_tree}.

\begin{lemma}\label{first_embed}
Let $H$ be a graph which restricts to a regular almost-tree of type $(\frac{1}{M}n^{1/k^\ast},s+1)$, $1\leq s<k_1+1$ and $M$ is a constant. Moreover, $H$ is $\Theta_{k_1,\cdots, k_{\ell}}$- free. Let $U$ and $W$ be any subsets in $L_H(s)$ and $L_H(s+1)$, respectively. Let $R$ be the bipartite graph $R=H[U\cup W]$ and $C_1=|V(\Theta_{k_1,\cdots, k_\ell})|$. Then $|E(R)|<C_1|V(R)|$. 
\end{lemma}

\begin{proof}
Suppose otherwise, that is, the average degree of $R$ is at least $2C_1$. Then there is a  subgraph $R_1$ with minimal degree at least $C_1$. We will embed a copy of $\Theta_{k_1,\cdots,k_\ell}$ in $H$ to reach a 
contradiction.  For this, recall $\Theta_{k_1,\cdots,k_\ell}$ is seen as two vertices $w$ and $w'$ connected by $\ell$ internally disjoint paths. Let $T$ denote the subgraph of $\Theta_{k_1,\cdots,k_\ell}$ induced by all the vertices at distance at least $s$ with $w$, which is a tree. In particular, $w'$ belongs to $T$. 
Then we see $\Theta_{k_1,\cdots,k_\ell} \backslash T$ is an $(s-1)$-subdivided $\ell$-star centered at $w$. We next embed $T$ into the graph $R_1$ 
with the following properties. 
\begin{enumerate}[(a)]
	\item embed all the leaves of $T$ in $U$.
	\item the embedded image of $T$ intersecting $L_H(s)$ belongs to descendants of pairwise distinct 
	vertices in $L_H(1)$.
\end{enumerate}
Property (a) is ensured by Lemma~\ref{embedding_tree}. 
For property (b), 
we observe that property (2) in Definition \ref{regular_almost_tree} says that
for every vertex in $L_H(s+1)$, its neighbours in $L_H(s)$
are descendants of pairwise distinct vertices in $L_H(1)$. Therefore, since in $R_1$ every vertex has degree at least $C_1$, by applying the greedy algorithm 
in Lemma~\ref{embedding_tree}, 
every time when we embed a vertex of $T$ in $L_H(s)\cap R_1$,
we have at least $C_1$
choices whose ancestors in $L_H(1)$ are distinct. Then property (b) follows immediately. 

Now we have embedded the tree $T$ into $H$,
satisfying properties (a) and (b). 
Consider two situations. 
If $s<k_1$, then we look at all the embedded leaves of $T$, and 
trace back to $r$ through its ancestors. 
If $s=k_1$, we need to consider all the embedded leaves of $T$ together with the embedded image of $w'$, 
and then trace back to $r$. In both cases, 
we can embed $(s-1)$-subdivided star  $\Theta_{k_1,\cdots,k_\ell} \backslash T$ with $w$ embedded in $r$ and therefore embed the graph $\Theta_{k_1,\cdots,k_\ell}$.
\end{proof}

\begin{lemma}\label{first_type_induction} 
There exists a constant $M_{k_1+1}$, such that $G$ contains a subgraph $H$ which 
restricts to a regular almost-tree of type 
$(\frac{1}{M_{k_1+1}} n^{1/k^\ast},k_1+2)$.
\end{lemma}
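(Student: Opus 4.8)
The plan is to run the usual breadth-first-search-tree construction rooted at $r$, growing a regular tree one layer at a time and invoking the forbidden-subgraph hypothesis only to keep the growth from stalling. First I would make two preparatory reductions. Delete the bad sets $\mathfrak B^{(2)},\dots,\mathfrak B^{(k^\ast-1)}$: by the displayed bound their sizes are negligible compared with the degrees, and after the deletion every vertex in layers $2,\dots,k^\ast-1$ has fewer than $\tfrac{1}{2C_0}n^{1/k^\ast}$ parents. Then pass to a sub-branching so that every surviving vertex still retains a constant fraction of its children; with $d=\tfrac{1}{C_0}n^{1/k^\ast}$ this means every surviving vertex has $\Theta(n^{1/k^\ast})$ ``good'' children.

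Next I would build a genuine regular tree by induction on the depth. Suppose a regular tree $T$ of type $(d_s,s)$ has been produced for some $s\le k_1$, with $d_s=d/K^{\,s}$. Since $G$ is bipartite, every layer-$s$ vertex of $T$ has all but at most one of its $\Theta(n^{1/k^\ast})$ neighbours in layer $s+1$, which is hypothesis~(B) of Lemma~\ref{regular_tree}. If the bipartite graph $H_s$ between the layer-$s$ vertices of $T$ and their children satisfies the density bound $|E(H_s)|\le C_1|V(H_s)|$ --- hypothesis~(C) --- then Lemma~\ref{regular_tree} upgrades $T$ to a regular tree of type $(d_s/K,s+1)$; iterating from $s=1$ up to $s=k_1$ yields a regular tree of type $(d/K^{\,k_1+1},k_1+1)$, whose branching is still $\Omega(n^{1/k^\ast})$ because $k_1$ is a fixed constant. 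The remaining layer is essentially free: every leaf of this tree has at least $d-1$ children in layer $k_1+2$, and no single-parent pruning is required there, so keeping $\tfrac{1}{M}n^{1/k^\ast}$ of them for each leaf and then passing to a sub-collection of the branches under $L_1$ that are pairwise isomorphic produces a regular almost-tree of type $(\tfrac{1}{M}n^{1/k^\ast},k_1+2)$ for a suitable constant $M=M_{k_1+1}$ assembled from $C_0$ and $K$.

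The heart of the argument is to guarantee hypothesis~(C) at each step, namely that between two consecutive layers of the growing tree there are only linearly many edges. Were this to fail, say $|E(H_s)|>C_1|V(H_s)|$, I would derive a copy of $\Theta_{k_1,\dots,k_\ell}$ in $G$ --- a contradiction. The idea: after passing to a subgraph of $H_s$ of large minimum degree and exploiting that parent-degrees on the deeper side are capped by the bad-set deletion, $H_s$ is ``spread'' enough that between suitable vertices it can route internally disjoint paths of any parity-admissible length; one then combines a single downward path of length exactly $k_1$ supplied by $T$ with $\ell-1$ further paths between the same two endpoints, each descending along a fresh branch of $T$ and then oscillating inside $H_s$, padded from length $k_1$ up to the required $k_i$ by detours of length $2$ --- possible precisely because all the $k_i$ share a parity. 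The branching of $T$ being $\gg\ell$, together with the large minimum degree of $H_s$, is what makes all $\ell$ paths internally disjoint, both among themselves and across the two families; Lemma~\ref{embedding_tree} is convenient for laying down the oscillating portions.

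I expect the main obstacle to be exactly this last step: turning ``too many edges between two consecutive BFS layers'' into an honest copy of $\Theta_{k_1,\dots,k_\ell}$, with each of the $\ell$ paths of precisely the prescribed length and all internally disjoint. Keeping the oscillating portions inside $H_s$ disjoint from one another and from the descending branches, and making the length arithmetic close, are the delicate points, and the same-parity hypothesis on $k_1,\dots,k_\ell$ enters crucially here. Everything else --- carrying the constants through (each layer costs a factor $K$, but $k_1$ is fixed), the ``$-1$''s in the degree inequalities, and verifying the hypotheses of Lemmas~\ref{extracting_stars} and~\ref{regular_tree} --- should be routine bookkeeping.
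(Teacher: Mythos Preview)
Your outline shares the paper's scaffolding --- grow a BFS tree layer by layer, invoke Lemma~\ref{regular_tree} at each step, and use the forbidden-$\Theta$ hypothesis to keep the growth going --- but the crucial step is handled differently, and in your version there is a real gap.

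You propose to verify hypothesis~(C) of Lemma~\ref{regular_tree} for the \emph{whole} bipartite graph $H_s=G[L_s(T)\cup L_{s+1}]$, arguing that if $|E(H_s)|>C_1|V(H_s)|$ then one can embed $\Theta_{k_1,\dots,k_\ell}$. But your embedding does not go through. First, ``a single downward path of length exactly $k_1$ supplied by $T$'' simply does not exist when $s<k_1$, and the induction must run for every $s\le k_1$. Second, and more seriously, even with the corrected scheme (descend $s$ steps in $T$, then oscillate $k_i-s$ steps inside the dense piece of $H_s$), the $\ell$ tree-paths from the spider's leaves back to $r$ are internally disjoint only if those leaves sit under $\ell$ \emph{distinct} children of $r$ in $L_1$. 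A subgraph of $H_s$ with minimum degree $C_1$ (a fixed constant) gives you no control over this: the entire dense piece can live inside a single branch of $T$, in which case every candidate leaf routes through the same vertex of $L_1$ and the construction collapses. Neither ``branching of $T$ being $\gg\ell$'' nor ``large minimum degree of $H_s$'' rescues this, because the former is about $T$ globally while the obstruction is local to one branch, and the latter is only a constant.

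The paper avoids this by \emph{not} trying to prove~(C) for all of $H_s$. Instead it defines, at each step, a cascading sequence of ``bad'' sets $B_{s+1}\supseteq B_s\supseteq\cdots\supseteq B_1$ (where $B_{s+1}=\mathfrak B^{(s+1)}\cap L(s+1)$ and each $B_i$ consists of those layer-$i$ vertices with many children in $B_{i+1}$) and applies the forbidden-$\Theta$ argument only to the bipartite graph $R=G^{(s)}[B_s\cup B_{s+1}]$. The conclusion $|E(R)|<C_1|V(R)|$ gives $|B_s|\lesssim n^{-1/k^\ast}|B_{s+1}|$, and iterating down the layers yields $|B_1|=O(1)$. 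Deleting only $O(1)$ vertices from $L_1$ (together with their descendants) is what lets the regular-tree structure survive. It is precisely this recursive bookkeeping --- pushing the ``bad'' mass down through the layers rather than trying to bound density globally --- that substitutes for the branch-spreading argument you are missing.
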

\begin{proof} For $0\leq s\leq k_1+1$, we will construct subgraphs of $G$ which  restrict to regular almost-trees of type $(\frac{1}{M_s} n^{1/k^\ast},s+1)$. We prove this by induction. 
In the base case $s=0$, $G$ restricts to a trivial  regular almost-tree of type $(\frac{1}{C_0}n^{1/k^\ast},1)$.
Now suppose for any  $0\leq s<k_1+1$, we have constructed  $G^{(s)}$  which  restricts to a regular almost-tree of type $(\frac{1}{M_s} n^{1/k^\ast},s+1)$. 
We denote by $L(0)$, $L(1)$, $\cdots$, $L(k^\ast)$ the first $k^\ast+1$ layers of $G^{(s)}$.  

The case $s=0$ is degenerate, and we omit its separate treatment because it is simpler. In the case $s\geq 1$, 
recall definition of $\mathfrak B^{(s+1)}$, and put 
\begin{equation}\label{Bs+1definition}
B_{s+1}=\mathfrak B^{(s+1)} \cap L(s+1).
\end{equation}
For $i=s,\cdots,1$ (in that order), define
\begin{equation}\label{else_Bi}
B_i=\{v\in L(i)  \big |  \text{deg}_{G^{(s)}[L(i) \cup B_{i+1}]}(v)\geq  \frac{1}{2M_s}n^{1/k^\ast}\}. 
\end{equation}
We stress that the definitions of $B_1, \cdots, B_{s+1}$ are within the induction process and the subscripts represent their corresponding layers. Consider the bipartite graph $R=G^{(s)}[B_{s+1}\cup B_{s}]$. 
Take $C_1= |V(\Theta_{k_1,\cdots,k_\ell})|$. By Lemma \ref{first_embed}, we have 

\begin{equation}\label{density_claim}
 	|E(R)| < C_1|V(R)|
\end{equation}

Therefore, we have $|B_s| \times \frac{1}{2M_s}n^{1/k^\ast}\leq C_1 ( | B_s|+ |B_{s+1}|)$, it follows that  
\begin{equation}\label{Bs_estimate}
|B_s| \leq \frac{4M_s C_1}{n^{1/k^{\ast}}} |B_{s+1}|.
\end{equation}
Then for each $i=1,\cdots,s-1$,  since each vertex in $L(i+1)$ has exactly one parent, 
$|B_{i}| \times \frac{1}{2M_s}n^{1/k^\ast}\leq  |B_{i+1}|,$
which implies
\begin{equation}\label{other_biestimate}
|B_i| \leq \frac{2M_s}{n^{1/k^{\ast}}}| B_{i+1}|
\leq  2C_1 \Big(\frac{2M_s}{n^{1/k^\ast}}\Big)^{s+1-i} |B_{s+1}| 
= O(n^{\frac{i-1}{k^\ast}}) \ll |L(i)|,  
\end{equation}
where the equality follows from  (\ref{trivial_bound_of_B}). 
In particular, 
\begin{equation}\label{layer_two_constant}
|B_1|= O(1) \ll |L(1)|.
\end{equation}

We put $G^{(s+1)}=G^{(s)}$ and rename the first $k^\ast+1$ layers as $\{L_{G^{(s+1)}}(i)\}_{i=0}^{k^\ast}$. Firstly, we delete $B_i$ from $L_{G^{(s+1)}}(i)$ for all $i=1,\cdots,s+1$. 
	Remember in $G^{(s)}$, every vertex $v\in L(i)$, for $i=0,\cdots, s$,
 has exactly $\frac{1}{M_s}n^{1/k^\ast}$ children. Now in $G^{(s+1)}$, after the deletion of the sets $B_i$, for $i=0,\cdots,s$,
 every remaining vertex $v\in L_{G^{(s+1)}}(i)$ has at least $\frac{1}{2M_s}n^{1/k^\ast}$ children left. This is true for the case $i=0$ by
 (\ref{layer_two_constant}) and
 the rest cases  $i=1,\cdots,s$ by (\ref{else_Bi}).
	Moreover, each $v\in L_{G^{(s+1)}}(s+1)$ has at least $\frac{1}{2C_0}n^{1/k^\ast}$ children which is of course at least $\frac{1}{2M_s}n^{1/k^\ast}$ by Definition~\ref{badset_definition}.
	
	Therefore, we can delete some more vertices from $L_{G^{(s+1)}}(i)$, $i=s,s-1,\cdots,1$, to update $G^{(s+1)}$ so that	$G^{(s+1)}$ restricts to a regular tree of type $(\frac{1}{2M_s} n^{1/k^\ast}, s)$. 
Note that, we do not delete vertices after the $s$-th layer, so now every vertex in $L_{G^{(s+1)}}(s)$ still has at least $\frac{1}{2M_s}n^{1/k^\ast}$ children and every vertex in $L_{G^{(s+1)}}(s+1)$ has at least $\frac{1}{2C_0}n^{1/k^\ast}$ children. Next we modify $G^{(s+1)}$ in three steps. Note that in all three steps, we only delete vertices in $L_{G^{(s+1)}}(i)$, $i=1,\ldots, s+1$. For the vertices in $L_{G^{(s+1)}}(s+1)$, the number of children does not change. In order to apply Lemma \ref{regular_tree}, the vertices in $L_{G^{(s+1)}}(s+1)$ have many children and condition (B) is satisfied. 

\begin{enumerate}
\item {\bf Grow a regular tree of type $(\frac{1}{M_{s+1}'} n^{1/k^\ast}, s+1)$ from a regular tree of type $(\frac{1}{2M_s} n^{1/k^\ast}, s)$ for some larger constant $M_{s+1}'$.}

 Since every vertex in $L_{G^{(s+1)}}(s)$  has at least $\frac{1}{2M_s}n^{1/k^\ast}$ children, we can delete some edges such that every vertex in $L_{G^{(s+1)}}(s)$ has exactly $\frac{1}{2M_s}n^{1/k^\ast}$ children and $G^{(s+1)}$ restricts to a regular almost-tree of type $(\frac{1}{2M_s} n^{1/k^\ast}, s+1)$. Let $d$ in  Lemma~\ref{regular_tree} be equal to $\frac{1}{2M_s} n^{1/k^\ast}$. The degree of every vertex in $L_{G^{(s+1)}}(s)$ is still upper bounded by $C_0n^{1/k^\ast}<2M_sn^{1/k^\ast}$. Therefore condition (B) of Lemma~\ref{regular_tree} is satisfied. By Lemma \ref{first_embed}, condition (C) of Lemma ~\ref{regular_tree} is satisfied. 
We apply Lemma~\ref{regular_tree} (taking $C_0$ there to be $2M_s$) to update $G^{(s+1)}$ which restricts to
a regular tree of type 
$(\frac{1}{M_{s+1}'} n^{1/k^\ast}, s+1)$, for some constant $M_{s+1}'>2M_s$.

\item {\bf For each $v\in L_{G^{(s+1)}}(1)$, grow a regular tree of type $(\frac{1}{M_{s+1}}n^{1/k^\ast},s+1)$ from a regular tree of type $(\frac{1}{M_{s+1}'}n^{1/k^\ast},s)$,  regarding $v$ as the root.}

 The general idea in this step is that we inductively and alternatively construct regular trees and regular almost-trees from the bottom up by using Lemmas \ref{regular_tree} and \ref{first_embed}.

Let $v'$ be a descendant of $v$ in $L_{G^{(s+1),v}}^v(s-1)$. Let $G^{(s+1),v'}$ denote the subgraph of $G^{(s+1)}$ induced by the vertex $v'$ and all its descendants. Since every vertex in $L_{G^{(s+1),v'}}^{v'}(1)$ has at least $\frac{1}{2C_0}n^{1/k^\ast}$ children, we delete some edges such that every vertex in $L_{G^{(s+1),v'}}^{v'}(1)$ has exactly $\frac{1}{M_{s+1}'}n^{1/k^\ast}$ children and therefore $G^{(s+1),v'}$ restricts to a regular almost-tree of type $(\frac{1}{M_{s+1}'}n^{1/k^\ast},2)$. Clearly,  $G^{(s+1),v'}[L_{G^{(s+1),v'}}^{v'}(1)\cup L_{G^{(s+1),v'}}^{v'}(2)]$ satisfies condition $(C)$ in Lemma \ref{regular_tree} by Lemma \ref{first_embed}. 
By Lemma \ref{regular_tree}, we delete some vertices in $L_{G^{(s+1),v'}}^{v'}(1)$ and some edges such that $G^{(s+1),v'}$ restricts to a regular tree of type $(\frac{1}{M_{s+1}^{(1)}}n^{1/k^\ast},2)$, where $M_{s+1}^{(1)}$ is a constant larger than $M_{s+1}'$. See Figure \ref{figure1}, the right part.

\begin{figure}[H]
	\centering
	\includegraphics[width=1\textwidth]{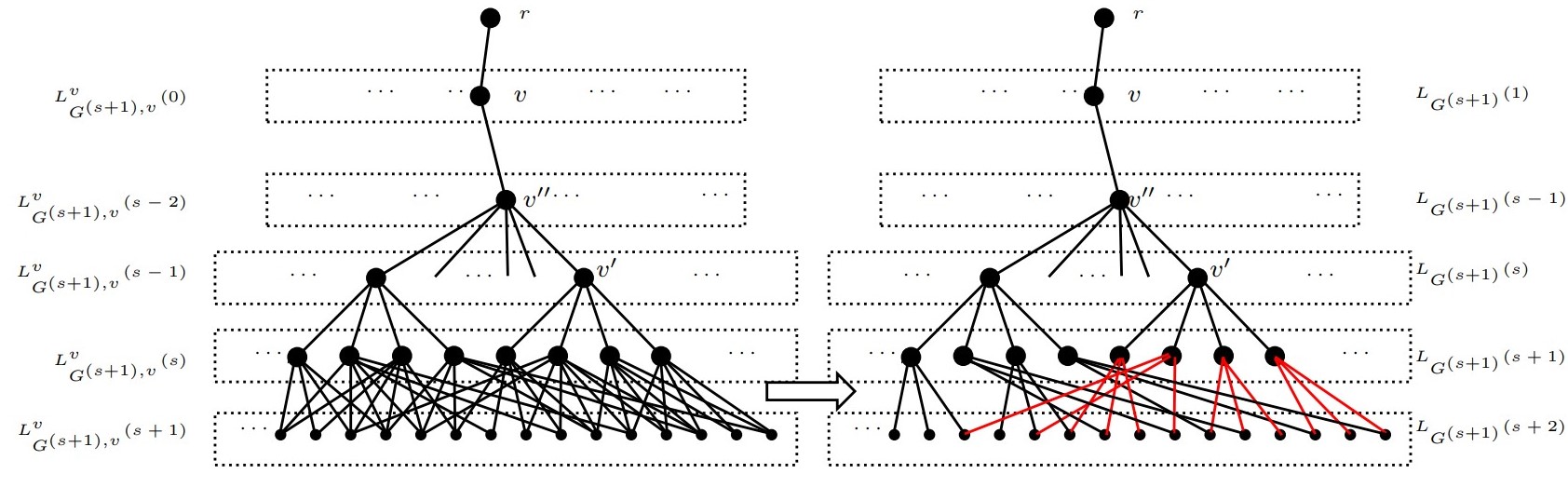}
	\caption{In this figure, we show the base case of Step (2). The left part is before we do Step (2), whereas the right part is after the base case. The names of layers on the left side are with respect to vertex $v$ and right side are with respect to root $r$. It can be seen that on the right side, $G^{(s+1),v'}$ restricts to a regular tree of type $(\frac{1}{M_{s+1}^{(1)}}n^{1/k^\ast},2)$, and as for $v''$, $G^{(s+1),v''}$ restricts to a regular almost-tree of type  $(\frac{1}{M_{s+1}^{(1)}}n^{1/k^\ast},3)$ (up to deleting some descendants).
		The red edges are what are left after the clean up. 
They show that we can clean the last two layers to have the structure of a regular tree.    }\label{figure1}
\end{figure}

Inductively, suppose for some $t$, $1\leq t\leq s-1$, we find a constant $M_{s+1}^{(t)}$, such that for every vertex $v'\in L_{G^{(s+1),v}}^v(s-t)$, $G^{(s+1),v'}$ restricts to a regular tree of type $(\frac{1}{M_{s+1}^{(t)}}n^{1/k^\ast},t+1)$. Now we consider any vertex $v''\in L_{G^{(s+1),v}}^v(s-t-1)$ (when $t=s-1$, $v''=v$.). Note that it has $\frac{1}{M'_{s+1}}n^{1/k^\ast}$ many children. We only keep $\frac{1}{M_{s+1}^{(t)}}n^{1/k^\ast}$ of them so that $G^{(s+1),v''}$ restricts to a regular almost-tree of type $(\frac{1}{M_{s+1}^{(t)}}n^{1/k^\ast},t+2)$. By Lemma \ref{first_embed}, $G^{(s+1),v''}[L_{G^{(s+1),v''}}^{v''}(t+1)\cup L_{G^{(s+1),v''}}^{v''}(t+2)]$ satisfies condition $(C)$ in Lemma \ref{regular_tree}. Clearly conditions $(A)$ and $(B)$ are both satisfied. By Lemma \ref{regular_tree}, $G^{(s+1),v''}$ has a subgraph which restricts to a regular tree of type $(\frac{1}{M_{s+1}^{(t+1)}}n^{1/k^\ast},t+2)$. Especially, when $t=s-1$, let $M_{s+1}=M_{s+1}^{(s)}$, and the regular tree structure is obtained as required.   

\item {\bf Combine all the regular trees of type $(\frac{1}{M_{s+1}}n^{1/k^\ast},s+1)$ into a regular almost-tree of type $(\frac{1}{M_{s+1}}n^{1/k^\ast},s+2)$.} 

After step (2), $G^{(s+1),v}$ restricts to a regular tree of type $(\frac{1}{M_{s+1}}n^{1/k^\ast},s+1)$, for any $v\in L_{G^{(s+1)}}(1)$. But in $L_{G^{(s+1)}}(1)$, we have $\frac{1}{M_{s+1}'}n^{1/k^\ast}$ vertices. We only keep  $\frac{1}{M_{s+1}} n^{1/k^\ast}$ of them. In this way, $G^{(s+1)}$ restricts to a regular almost-tree of type $(\frac{1}{M_{s+1}}n^{1/k^\ast},s+2)$. 
\end{enumerate}  

The above procedure finishes the induction step. This means we obtain $G^{(s+1)}$ which restricts to a regular almost-tree of type $(\frac{1}{M_{s+1}}n^{1/k^\ast},s+2)$.
Finally, the induction stops after the step when we take $s=k_1+1$. Then we can take $H=G^{(k_1+1)}$ to conclude. 
\end{proof}

\subsection{Second Part of Induction Step.}
Assume
a bipartite graph $H$ is $\Theta_{k_1,\cdots,k_\ell}$-free, and 
restricts to a regular almost-tree of type 
$(d, s+1)$, where $k_1+1\leq s\leq k^\ast-1$. 
Here we assume $d$ is an integer.
Fix $r\in V(H)$ the root, and then write 
   \begin{align}
   &  L(1)      =\{v_1^{(1)},\cdots,v_1^{(d)}\},\\
   & L(k_1)      =\{v_{k_1}^{(1)}, \cdots, v_{k_1}^{(d^{k_1})} \},\\ 
   & L(k_1+1)  = \{v_{k_1+1}^{(1)}, \cdots, v_{k_1+1}^{(d^{k_1+1})} \}.
\end{align}
In the layer $L(s)$, we define 
 $\mathfrak  D=\{ D_j\}_{j=1}^{d}$, 
where each $D_j$ is the subset of descendants of the vertex $v_1^{(j)}\in L(1)$. 
Similarly, we denote by $A_t\subset L(s)$, for $t=1,\cdots,d^{k_1}$, the subset of descendants in layer $L(s)$ of the vertex $v_{k_1}^{(t)}\in L(k_1)$. 
We denote by $B_p\subset L(s)$, $p=1,\cdots, d^{k_1+1}$, the set of descendants in layer $L_s$ of the vertex $v_{k_1+1}^{(p)} \in L(k_1+1)$. By assumption $k_1+1\leq s$. 
Each $D_j$ is a disjoint union of $A_t$'s, and each $A_t$ is a disjoint union of $B_p$'s. 
The total number of $A_t$'s is $d^{k_1}$ and each $A_t$ has size $d^{s-k_1}$.
The total number of $B_p$'s is $d^{k_1+1}$, and each $B_p$ has size $d^{s-k_1-1}$.
\begin{definition}\label{gamma(s)definition}
For $i=1,\cdots,\ell-1$, 
put $\tau_i=k_1+k_{i+1}-2s -1$.
Let $\Gamma(s)$ be a graph consisting of $(\ell-1)$ vertex disjoint paths 
$\{P_i\}_{i=1}^{\ell-1}$, where each $P_i$ has edge length $\tau_i$.  Let $\Lambda(s)$ be a tree which is the union of $k_2+\cdots+k_\ell+3\ell$ copies of $P_i$ for $i=1,\cdots,k_{\ell}$ which share one of their endpoints (called the center of $\Lambda(s)$).
\end{definition}
		
\begin{definition}
We call a vertex $w\in L(s+1)$ \textbf{strong}, 
if one can embed $\Lambda(s)$ to $H[L(s)\cup L(s+1)]$ so that the center of $\Lambda(s)$ is sent to $w$ and all the leaves of $\Lambda(s)$ are sent to vertices belonging to pairwise distinct elements in $\mathfrak D$. 
\end{definition}

\begin{definition}\label{thick}
A vertex $u\in L(s)$ is called \textbf{thick}, 
if it has at least one strong neighbour in $L(s+1)$. Let $L^{\text{thick}}(s)\subset L(s)$ denote the set of thick vertices. 
The remaining vertices in 
$L(s)$ are called \textbf{thin}, and are denoted by 
$L^\text{thin}(s): = L(s)\backslash L^{\text{thick}}(s)$. 
\end{definition}

\begin{lemma}\label{thick_thin_discussion} Assume 
a bipartite graph $H$ is $\Theta_{k_1,\cdots,k_\ell}$-free, and $H$ restricts to a regular almost-tree of type 
$(d, s+1)$, where $d$ is an integer. Then 
\begin{equation}
|L^{\text{thick}}(s)| \leq (\ell-2) d^{s-1}.
\end{equation}
\end{lemma}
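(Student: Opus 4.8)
The plan is to argue by contradiction: assuming $|L_s^{\mathrm{thick}}| > (\ell-2)\,d^{s-1}$, I would embed a copy of $\Theta_{k_1,\cdots,k_\ell}$ into $H$, contradicting $\Theta_{k_1,\cdots,k_\ell}$-freeness.

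\emph{Finding a good pole.} First I would single out a vertex of $L_{k_1}$ with many thick descendants, via a pigeonhole on the $B_p$-decomposition of $L_s$. Recall that $L_s$ is the disjoint union of the $d^{k_1+1}$ blocks $B_p$, each of size $d^{s-k_1-1}$, and that the blocks lying below a fixed $q\in L_{k_1}$ are precisely the descendant sets in $L_s$ of the $d$ children of $q$. If, for every $q\in L_{k_1}$, the thick vertices of $L_s$ lying below $q$ occupied at most $\ell-2$ of these $d$ blocks, then
\[
|L_s^{\mathrm{thick}}|\;\le\; d^{k_1}\cdot(\ell-2)\cdot d^{s-k_1-1}\;=\;(\ell-2)\,d^{s-1},
\]
contradicting our assumption. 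Hence some $q\in L_{k_1}$ has thick descendants $w_2,\dots,w_\ell\in L_s$ lying below $\ell-1$ pairwise distinct children of $q$; for each $t$ fix a strong neighbour $v_t\in L_{s+1}$ of $w_t$.

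\emph{Building the theta graph.} I would take $r$ and $q$ as the two poles. Let $P_1$ be the unique tree path from $r$ to $q$, which has length $k_1$. For $t=2,\dots,\ell$, use that $v_t$ is strong: inside $H[L_s\cup L_{s+1}]$ it carries a spider one of whose legs has length $\tau_s=k_1+k_t-2s-1$ and ends at a vertex $a_t\in L_s$ sitting in some branch $D_{c_t}$; observe that $\tau_s\ge 1$ and that $\tau_s$ is odd, since $k_1,k_t$ have the same parity and $k_1+1\le s\le k^\ast-1$. Now let $P_t$ go from $r$ down the tree to $a_t$ (length $s$), then along that spider leg from $a_t$ to $v_t$ (length $\tau_s$), then across the edge $v_t w_t$, and finally up the tree from $w_t$ to $q$, staying inside the branch of $q$ (length $s-k_1$). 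Then
\[
|P_t| \;=\; s+\tau_s+1+(s-k_1)\;=\;k_t ,
\]
so $P_1,\dots,P_\ell$ connect $r$ and $q$ with the prescribed lengths $k_1,\dots,k_\ell$.

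\emph{The main obstacle: internal disjointness.} What remains, and where essentially all the work lies, is to arrange the choices so that the paths $P_1,\dots,P_\ell$ meet only at the poles. The part of $P_1$ together with the ancestors of $q$ lies in layers $\le k_1$, whereas the rest of each $P_t$ lies in layers $k_1+1,\dots,s+1$, so those two parts are automatically disjoint apart from the poles. Within the upper part, the segments from $r$ to $a_t$ lie in the branches $D_{c_t}$, the segments from $w_t$ to $q$ lie in the $\ell-1$ distinct child-branches of $q$, and the spider legs from $a_t$ to $v_t$ lie in $H[L_s\cup L_{s+1}]$. One must then select, greedily, the thick vertices $w_t$, their strong neighbours $v_t$, the spider legs and their endpoints $a_t$ so that: (i) each $c_t$ differs from the ancestor of $q$ in $L_1$, and the $c_t$ are pairwise distinct (ensuring the lower tree-segments and $P_1$ do not interfere); (ii) the edge $v_t w_t$ is not already traversed by the chosen spider leg of $v_t$; and (iii) the $\ell-1$ spider legs are internally disjoint from one another and from the tree-segments and the vertices $w_{t'}, a_{t'}$. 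Each such requirement forbids only $O(|V(\Theta_{k_1,\cdots,k_\ell})|)$ of the $\Theta(d)$ available children or neighbours at each step, so for $d$ (equivalently, $n$) large enough a greedy choice succeeds; carrying out this bookkeeping is the crux, and it is cleanest in the boundary case $s=k^\ast-1$, where $\tau_s=1$ for $t=2$ and the spider legs collapse to single edges.
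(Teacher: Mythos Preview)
Your argument is essentially the paper's: the same contradiction, the same double pigeonhole locating some $q\in L_{k_1}$ whose subtree contains thick descendants in $\ell-1$ distinct $B_p$-blocks, and the same assembly of $\Theta_{k_1,\ldots,k_\ell}$ with poles $r$ and $q$ by gluing the tree paths to the spider legs of the strong neighbours. The one place your justification deviates is the greedy step for condition~(i): you appeal to ``$\Theta(d)$ available children or neighbours at each step'', but for the choice of the branch $D_{c_t}$ there is no such abundance, since a strong vertex carries only $\ell-1$ legs by definition. The paper instead exploits directly that these $\ell-1$ legs end in $\ell-1$ \emph{pairwise distinct} $D_j$'s, so that at each stage---having committed fewer than $\ell-1$ branches so far---at least one leg still reaches a fresh branch; that is constant, not $\Theta(d)$, flexibility, but it is what drives the induction. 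Your points (ii) and (iii) concerning internal disjointness of the legs themselves are issues the paper's proof also leaves implicit.
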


\begin{proof}
Suppose for contradiction that 
$|L^{\text{thick}}(s)| 
\geq (\ell-2) d^{s-1}+1.$ By pigeonhole principle, since the number of $A_t$'s is $d^{k_1}$, there is a certain $A_t$ containing at least 
$(\ell-2)d^{s-1-k_1}+1 $ thick vertices. 
Now, since each $B_p$ has size $d^{s-1-k_1}$, 
so by pigeonhole principle again, 
we can find thick vertices 
$u_1\in B_{p_1},u_2\in B_{p_2}, \cdots, u_{\ell-1}\in B_{p_{\ell-1}}$, 
such that $\bigcup_{i=1}^{\ell-1} B_{p_i}\subset A_t$, and 
$p_1$, $p_2$, $\cdots$, $p_{\ell-1}$ are distinct. Also assume $A_t\subset D_j$, for some $1\leq j\leq d$. By the definition of regular almost-tree, for $u_1, u_2,\ldots,u_{\ell-1}$, which belong to the same $D_j$, we can find $\ell-1$ distinct strong vertices $w_1,w_2,\ldots,w_{\ell-1}$,
which are adjacent to them, respectively. Moreover, the vertices $u_1, u_2,\ldots,u_{\ell-1}$ belong to a certain $A_t$, which means they are descendants of one single  vertex $v_{k_1}^{(t)}\in L(k_1)$.

Now we can embed the graph $\Theta_{k_1,\cdots,k_\ell}$ as follows (see Figure~\ref{figure}).

{\bf (1) Embed $\Gamma(s)$.}

We start from $u_1$, which has a strong neighbour $w_1 \in L(s+1)$. 
Then we can embed the path $P_1$ with length $k_1+k_2-2s-1$ between $w_1$ and a vertex in $D_{j'}$ with $j'\neq j$. Moreover, we can make sure that the embedded image of $P_1$ does not intersect any  $u_i$ for $i=1,\cdots, \ell-1$, or any $w_i$ for $i=2,3,\cdots,\ell-1$.
 
Inductively, suppose we have already used $u_1,\ldots,u_i$ and hence $w_1,\ldots,w_i$ to successfully embed paths $P_1,P_2,\cdots,P_i$, for $i<\ell-1$. 
 In other words, we make sure the following: 
\begin{enumerate}
	\item the embedded images of $P_1,P_2,\cdots, P_i$ are pair-wise vertex disjoint.
	\item the embedded images end at
	distinct elements of $\mathfrak D$, neither in $D_j$. 
	\item for each $P_{i'}$ in this list, 
	the embedded image of it does not intersect any $u_t$ for $t=1,\cdots,\ell-1$, or any $w_t$ with $t \in \{ 1,\cdots,\ell-1\} \backslash \{i'\}$.
\end{enumerate} 

	In the definition of $\Lambda(s)$, the number  $k_2+\cdots +k_\ell+3\ell$ is taken to be a safe constant, which will be explained later. Note that every $P_t$ has edge length $k_1+k_{t+1}-2s-1<k_{t+1}$, $t=1,\cdots, \ell-1$. Now, starting from the strong vertex $w_{i+1}$ which joins the thick vertex $u_{i+1}$, we aim to embed $P_{i+1}$. With $w_{i+1}$ being strong, it connects with at least $k_2+\cdots+k_{\ell}+3\ell$ internally disjoint paths with lengths $k_1+k_{i+2}-2s-1$, ending at distinct elements of $\mathfrak{D}$. Among these paths, at most $k_2+\cdots+k_{i+1}$ of them intersect at least one of the paths $P_1,\cdots,P_i$. In order to avoid $u_t$, $t=1,\cdots, \ell-1$ and $w_t$, $t\in \{1,\cdots,\ell-1\}\setminus \{i+1\}$, we disregard at most $2\ell$ of the paths. So there are at least $\ell$ paths of lengths $\tau_{i+1}$ which are still available. At most $i$ of them end at the same element of $\mathfrak D$ with one of the embedded paths $P_1, P_2,\ldots, P_i$. Therefore, we can choose one such good path to embed $P_{i+1}$, which finishes the induction step. Eventually, at the end of the induction  we have embedded the forest $\Gamma(s)$ as we wanted. In Figure~\ref{figure}, the blue paths represent the paths $P_1,P_2,\ldots,P_{\ell-1}$.

{\bf (2) Extend $\Gamma(s)$ to $\ell -1$ longer internally disjoint paths.}

Note that $P_1, \cdots, P_{\ell-1}$ end at vertices belonging to pairwise distinct elements of $\mathfrak D$, also different from $D_j$.
 Noticing the structure of regular almost-tree, there are $\ell-1$ internally disjoint paths $P'_1,\cdots,P'_{\ell-1}$ starting from the end vertices of $P_1,\cdots,P_{\ell-1}$ and ending at $r$. In Figure~\ref{figure}, we illustrate the paths $P'_1,\cdots,P'_{\ell-1}$ with red paths. 
 
{\bf (3) Find $\ell$ internally disjoint paths between $r$ and $v_{k_1}^{(t)}$.} 

 Noticing again  the structure of regular almost-tree, we can find $\ell-1$ disjoint paths $Q_1,\cdots,Q_{\ell-1}$ starting from $v_{k_1}^{(t)}$ and ending at $u_1,\cdots,u_{\ell-1}$, respectively. In Figure~\ref{figure}, we illustrate the paths $Q_1,\cdots,Q_{\ell-1}$ with green paths. 

Therefore, between vertices $r$ and $v_{k_1}^{(t)}$, we find $\ell$ internally disjoint paths with length $k_1,\cdots, k_\ell$. The first path is the path through the regular almost-tree with length $k_1$. 
The other $\ell-1$ paths are $Q_i\cup u_iw_i\cup P_i\cup P'_i$, for $i=1,2,\cdots,\ell-1$. We illustrate this procedure in Figure~\ref{figure}. By simply adding the edge length of each part, we can see that each path has edge length $k_{i+1}$. Therefore, it gives an embedding of the graph $\Theta_{k_1,\cdots,k_\ell}$ into $H$, which is a contradiction. 
\end{proof}

\begin{figure}
	\centering
	\includegraphics[width=0.7\textwidth]{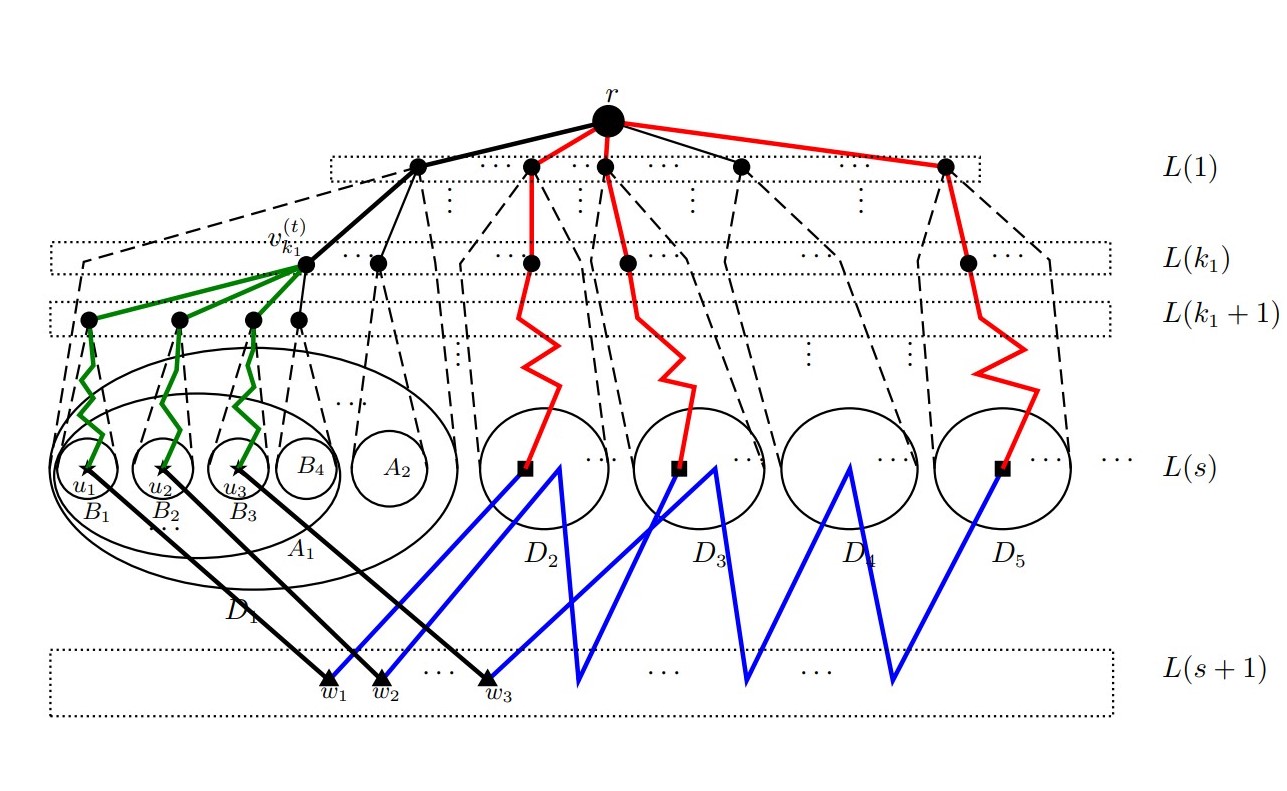}
	\caption{An illustration of the embedding of $\Theta_{k_1,\ldots,k_4}$. We plot three thick vertices $u_1$, $u_2$ and $u_3$ (stars) and three strong vertices $w_1$, $w_2$ and $w_3$ (triangles). We find 3 disjoint paths $P_1,P_2,P_3$ (blue paths) starting from $w_1$, $w_2$ and $w_3$, respectively. For the end vertices (squares), we find 3 internally disjoint paths $P_1',P_2', P_3'$ (red paths) starting from them and ending at $r$. There are also 3  internally disjoint paths $Q_1, Q_2, Q_3$ starting from $v_{k_1}^{(t)}$ and ending at $u_1$, $u_2$ and $u_3$ (green paths).      }\label{figure}
\end{figure}

\begin{proof}[The Second Part of Proof of Proposition~\ref{proposition_general}] 
By Lemma~\ref{first_type_induction}, there exists a subgraph $H$, which restricts to a regular almost-tree of type $(\frac{1}{M_{k_1+1}}n^{1/k^\ast}, k_1+2)$. Note that, in order to obtain $H$, we only have deleted vertices which were at distance at most $k_1+2$ with the root. Inductively,  we can suppose that, 
for $k_1+1 \leq s \leq k^\ast-1$, we have found 
a subgraph $H$ whose first $k^\ast+1$ layers are named as $\{L(j)\}_{j=0}^{k^\ast}$ which restricts to  
a regular almost-tree of type $(d,s+1)$, where $d=\frac{1}{M_s}n^{1/k^\ast}$. 
By Lemma~\ref{thick_thin_discussion}, $|L^{\text{thick}}(s)| \leq (\ell-2) \lfloor d\rfloor^{s-1}$. We delete $L^{ \text{thick}}(s)$ from $L(s)$, 
and repeat the argument in the appendix to extract a further subgraph, still called $H$.

 Now $H$ restricts to a regular tree of type $(\frac{1}{M_s'} n^{1/k^\ast},s)$, with $M_s'>M_s$,
in which the vertices in layer $L(s)$ are all thin and with $d$ children.
Then we recall the definition of $\mathfrak B^{(s+1)}$ as in Definition~\ref{badset_definition}. Here again, we stress that $\mathfrak B^{(s+1)}$ is a vertex subset of the original graph $G$, from the beginning of the proof of Proposition~\ref{proposition_general}.
Then, similar to the definitions of the sets $B_i$ made in (\ref{Bs+1definition}) and (\ref{else_Bi}), here we define 
\begin{align}
&	B_{s+1}=\mathfrak{B}^{(s+1)}\cap L(s+1).\\
&   B_s=\{v\in L(s)  \big |  \text{deg}_{H[L(s) \cup B_{s+1}]}(v)\geq  \frac{1}{2M_s}n^{1/k^\ast}\}. \\
&   B_i=\{v\in L(i)  \big |  \text{deg}_{H[L(i) \cup B_{i+1}]}(v)\geq  \frac{1}{2M'_s}n^{1/k^\ast}\}, \ \ i=s-1,\cdots,1.
\end{align}

Let us write  $C_2=|\Lambda(s)|$. If we consider the graph $R=H[B_{s+1}\cup B_s]$, then we can show that 
\begin{equation}
	|E(R)|<C_2|V(R)|
	\end{equation}
To see this, suppose otherwise, then 
$\Lambda(s)$ can be embedded in $R$ so that its center is sent to $L(s+1)$, and all its leaves are sent to pairwise distinct elements in $\mathfrak{D}$. This is a contradiction since we have deleted the thick vertices from the $s$th layer.

 Therefore $B_i$, $i=1,\cdots, s+1$, satisfy the following estimates.
 Firstly, 
\begin{equation}\label{B_second_induction_1}
	|B_s| \leq \frac{4M_s C_2}{n^{1/k^{\ast}}} |B_{s+1}|.
	\end{equation}
 Then the definitions give the following directly.
\begin{equation}\label{B_second_induction}
	|B_i| \leq \frac{2M'_s}{n^{1/k^{\ast}}}| B_{i+1}|
=O(n^{\frac{i-1}{k^\ast}}) \ll |L(i)|, \ \ i=s-1,\cdots, 1.
\end{equation}
In particular, we have 
\begin{equation}
	|B_1|=O(1).
\end{equation}

Now again, since $\Lambda(s)$ does not embed in $H[L(s)\cup L(s+1)]$ in a certain way, we have the following estimate. 
\begin{equation}\label{s+1layermanyvtx}
|L(s+1)| \geq \frac{n^{1/k^\ast}}{4C_2M_s}|L(s)| \geq \frac{1}{4C_2M_s'} \frac{1}{(M_s')^s}n^{(s+1)/k^\ast}.
\end{equation}

If $s\leq k^\ast-2$, we want to find a subgraph which restricts to a regular almost-tree of type $(\frac{1}{M_{s+1}}n^{1/k^\ast}, s+2)$. For this, we proceed in the following 3 steps, which are similar with the arguments in Lemma \ref{first_type_induction}. Firstly we delete $B_i$ from $L(i)$, for $i=1,\cdots, s+1$. By the definitions of $B_i$ and (\ref{B_second_induction_1}), 
	(\ref{B_second_induction}), every remaining vertex $v\in L(i)$, for $i=0,\cdots, s$,
	 has at least $\frac{1}{2M_s'}n^{1/k^\ast}$ children (actually, vertices in $L(s)$ have at least $\frac{1}{2M_s}n^{1/k^\ast}$ children). Every vertex $v\in L(s+1)$ has at least $\frac{1}{2C_0}n^{1/k^\ast}$ children. Then we delete some vertices from $L(i)$, $i=1,\cdots, s$, such that the resulting graph, still called $H$, restricts to a regular tree of type $(\frac{1}{2M_s'}n^{1/k^\ast}, s)$. 
\begin{enumerate}
	\item {\bf Grow a regular tree of type $(\frac{1}{M_{s+1}''} n^{1/k^\ast}, s+1)$ from a regular tree of type $(\frac{1}{2M_s'} n^{1/k^\ast}, s)$ for some larger constant $M_{s+1}''$.}
	
	By taking $C_0$ in Lemma $\ref{regular_tree}$ to be $2M_s$, condition (B) of Lemma \ref{regular_tree} is satisfied. 
Note that there is no strong vertex in $L(s+1)$, so $\Lambda(s)$ cannot embed in a certain way described eariler.
It means  condition (C) of Lemma \ref{regular_tree} is satisfied by taking $C_1$ there to be $C_2$. By Lemma \ref{regular_tree}, we can update $H$ such that it restricts to a regular tree of type $(\frac{1}{M_{s+1}''} n^{1/k^\ast}, s+1)$.  
	\item {\bf For each $v\in L(1)$, grow a regular tree of type $(\frac{1}{M_{s+1}}n^{1/k^\ast},s+1)$ from a regular tree of type $(\frac{1}{M_{s+1}''}n^{1/k^\ast},s)$,  regarding $v$ as the root.}

Just like in Step (2) in the proof of Lemma \ref{first_type_induction}, here we again inductively and alternatively construct regular trees and regular almost-trees from the bottom up. The only difference is that, at each step when the regular trees grow bigger, we need to delete more thick vertices to continue the process. 

For any vertex $v\in H$, let $H^{v}$ be the induced subgraph of $v$ and all its descendants. For $t=1,\cdots,k_1$, similar with Step (2) in the proof of Lemma \ref{first_type_induction}, repeatedly by Lemma \ref{first_embed} and Lemma \ref{regular_tree}, we can find constants $M_{s+1}^{(t)}$, such that for all vertices $v_i\in L_{H^v}^v(s-t)$, $i=1,\cdots,m$, $m=|V(L_{H^v}^v(s-t))|$,  $H^{v_i}$ restricts to a regular tree of type $(\frac{1}{M_{s+1}^{(t)}}n^{1/k^\ast}, t+1)$. For every $v'\in L_{H^v}^v(s-t-1)$, up to deleting some of $v_1,\cdots,v_m$ and their descendants, $H^{v'}$ restricts to a regular almost-tree of type $(\frac{1}{M_{s+1}^{(t)}}n^{1/k^\ast},t+2)$. In this way, we are able to grow regular trees and regular almost-trees alternatively, one level bigger at each step. Finally, when $t=k_1$, for every $v''\in L^v_{H^v}(s-k_1-1)$ and each of its children $v'$, we have that $H^{v'}$ restricts to a regular tree of type $(\frac{1}{M_{s+1}^{(k_1)}}n^{1/k^\ast},k_1+1)$ and therefore, $H^{v''}$ restricts to a regular almost-tree of type $(\frac{1}{M_{s+1}^{(k_1)}}n^{1/k^\ast},k_1+2)$.

For $t=k_1+1$, in $H^{v'}$ we define thick vertices in $L_{H^{v'}}^{v'}(k_1+1)\subset L_{H^v}^v(s)$ via Definition \ref{thick} by taking $s$ there equal to $k_1+1$. By Lemma~\ref{thick_thin_discussion}, the number of thick vertices for $H^{v'}$ is no more than $(\ell-1)(\frac{1}{M_{s+1}^{(k_1)}}n^{1/k^\ast})^{k_1}$. We then repeat the procedure at the beginning of {\it The Second Part of Proof of Proposition \ref{proposition_general}}. More precisely, in $H^{v'}$, we delete thick vertices from $L^{v'}_{H^{v'}}(k_1+1)$ and trim it into a smaller regular tree such that $H^{v'}$ restricts to a regular tree of type $(\frac{1}{M_{s+1}^{(k_1)'}}n^{1/k^\ast},k_1+1)$. We can also check that $H^{v'}$ satisfies condition (C) of Lemma \ref{regular_tree} by taking $C_1=|\Lambda(k_1+1)|$ for $\Lambda(k_1+1)$ cannot be embedded into $H^{v'}[L^{v'}_{H^{v'}}(k_1+1)\cup L^{v'}_{H^{v'}}(k_1+2)]$. By Lemma \ref{regular_tree}, $H^{v'}$ restricts to a regular tree of type $(\frac{1}{M_{s+1}^{(k_1+1)}}n^{1/k^\ast},k_1+2)$. We do the same procedure for every vertex $v'\in L^v_{H^v}(s-k_1-1)$,
	so that $H^{v'}$ restricts to a regular tree of type $(\frac{1}{M_{s+1}^{(k_1+1)}}n^{1/k^\ast},k_1+2)$. Note that for a vertex  $v''\in L^v_{H^v}(s-k_1-2)$, up to deleting some descendants until $L^v_{H^v}(s)$, $H^{v''}$ restricts to a regular almost-tree of type $(\frac{1}{M_{s+1}^{(k_1+1)}}n^{1/k^\ast},k_1+3)$. Also see Figure \ref{figure1}.

Suppose for any $t$, $k_1+1\leq t\leq s-1$, we found constants $M_{s+1}^{(t)}$, for any $v'\in L_{H^v}^v(s-t-1)$, $H^{v'}$ restricts to a regular almost-tree of type $(\frac{1}{M_{s+1}^{(t)}}n^{1/k^\ast},t+2)$. For $t+1$, in $H^{v'}$ we define thick vertices in $L_{H^{v'}}^{v'}(t+1)\subset L_{H^v}^v(s)$ and delete them by Lemma \ref{thick_thin_discussion}. We trim $H^{v'}$ into a smaller regular tree of type $(\frac{1}{M_{s+1}^{(t)'}}n^{1/k^\ast},t+1)$. Since there are no thick vertices, neither strong vertices, $H^{v'}[L_{H^{v'}}^{v'}(t+1)\cup L_{H^{v'}}^{v'}(t+2)]$ satisfies condition (C) in Lemma \ref{regular_tree} with respect to the choice of $C_1=|\Lambda(t+1)|$. By Lemma \ref{regular_tree}, we can find a constant $M_{s+1}^{(t+1)}$, such that $H^{v'}$ restricts to a regular tree of type $(\frac{1}{M_{s+1}^{(t+1)'}}n^{1/k^\ast},t+2)$. If $t<s-1$, we repeat this for every $v'\in L_{H^v}^v(s-t-1)$. Therefore, for any vertex $v''\in L_{H^v}^v(s-t-2)$, we can delete some of its children and descendants, so that 
$H^{v''}$ restricts to a regular almost-tree of type $(\frac{1}{M_{s+1}^{(t+1)'}}n^{1/k^\ast},t+3)$. The induction stops at $t=s-1$, $v'=v$ and  $H^v$ restricts to a regular tree of type $(\frac{1}{M_{s+1}}n^{1/k^\ast}, s+1)$, where $M_{s+1}=M_{s+1}^{(s)}$.

	\item {\bf Combine all the regular trees of type $(\frac{1}{M_{s+1}}n^{1/k^\ast},s+1)$ into a regular almost-tree of type $(\frac{1}{M_{s+1}}n^{1/k^\ast},s+2)$.} 
	
	From step (2), we actually obtain $\frac{1}{M_{s+1}''}n^{1/k^\ast}$ regular trees. We only keep $\frac{1}{M_{s+1}}n^{1/k^\ast}$ of them form a bigger regular almost-tree of type $(\frac{1}{M_{s+1}}n^{1/k^\ast}, s+2)$.  
\end{enumerate}
The above procedure finishes when $s=k^\ast-1$. 

In the case of $s=k^\ast-1$, we have 
\begin{equation}
	|L(k^\ast)| \geq  \frac{1}{4C_2M_{k^\ast-1}'} \frac{1}{(M_{k^\ast-1}')^{k^\ast-1}}n.
\end{equation}
We conclude by taking $M=4C_2(M_{k^\ast-1}')^{k^\ast}$.
\end{proof}

\section{Lower Bound for $\text{ex}(n , \Theta_{3,5,5})$}\label{proof_335}
With Theorem 1.2 at hand, the proof of Theorem~\ref{Thm335} reduces to the following proposition. 
Its proof is based on the construction given in the papers ~\cite{wenger1991extremal} and~\cite{Conlon_cycles_C4C6C10}. Here we include all the details of the proof for the convenience of the readers. 
\begin{prop} 
 $\text{ex}(n,\Theta_{3,5,5})=\Omega(n^{5/4})$.
\end{prop}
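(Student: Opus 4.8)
The plan is to establish the lower bound $\text{ex}(n,\Theta_{3,5,5})=\Omega(n^{5/4})$ by exhibiting an explicit $\Theta_{3,5,5}$-free graph on $n$ vertices with $\Omega(n^{5/4})$ edges. The natural starting point, following~\cite{Conlon_cycles_C4C6C10}, is the algebraic construction used to give the matching lower bound $\text{ex}(n,C_{10})=\Omega(n^{6/5})$; indeed Verstra\"ete and Williford obtained $\text{ex}(n,\Theta_{4,4,4})=\Omega(n^{5/4})$ from the incidence-type construction attached to $C_8$, and since $\Theta_{3,5,5}$ has the same "half-perimeter" $k^\ast=4$ as $\Theta_{4,4,4}$, the same exponent should come out of an analogous bipartite incidence graph. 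Concretely, I would work over $\mathbb F_q$ (or a suitable quotient) and define a bipartite graph $G$ with parts indexed by points and lines (or vectors and affine subspaces) so that $|V(G)|=\Theta(q^4)$ and every vertex has degree $\Theta(q)$, giving $|E(G)|=\Theta(q^5)=\Theta(n^{5/4})$. Then one sets $n=|V(G)|$ and checks the edge count in terms of $n$.

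The core of the argument is to verify that this graph contains no copy of $\Theta_{3,5,5}$. First I would record the basic girth/cycle structure of the construction: the graph should have no $C_4$ and no $C_6$ (this is typically built into the algebraic defining equations, as in the $C_{10}$-free construction), so any short closed walk is controlled. Given two fixed vertices $p,p'$, a $\Theta_{3,5,5}$ would consist of a path of length $3$ together with two internally disjoint paths of length $5$, all joining $p$ and $p'$. The length-$3$ path forces $p$ and $p'$ to be at distance at most $3$, and combining the length-$3$ path with one length-$5$ path produces a closed walk of length $8$, hence (after removing backtracking, which is excluded by the absence of short cycles) a cycle through $p$ and $p'$; the presence of two such length-$5$ paths then gives an even stronger configuration. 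The strategy is to argue that in the algebraic graph, once $p$ and $p'$ are joined by a path of length $3$, the number of vertices reachable from $p$ (resp.\ $p'$) by a path of length $2$ that can be completed to a length-$5$ $p$--$p'$ path is too small to supply two internally disjoint such paths — equivalently, that the defining equations force any length-$3$ and length-$5$ pair of $p$--$p'$ paths to share an internal vertex. This is where the specific polynomial identities of the construction must be exploited, and it is the step I expect to be the main obstacle: one needs to translate "two internally disjoint length-$5$ paths plus a length-$3$ path" into a system of equations over $\mathbb F_q$ and show it has no admissible solution with all the required genuineness (distinctness and internal disjointness) conditions.

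Assuming that verification goes through, the remainder is bookkeeping: confirm that $G$ has $n=\Theta(q^4)$ vertices and $\Theta(q^5)$ edges so that $|E(G)| = \Omega(n^{5/4})$, and handle arbitrary $n$ (not just those of the form $q^4$ with $q$ a prime power) by the standard device of choosing a prime power $q$ with $q^4$ within a constant factor of $n$ and padding or deleting vertices, which changes the edge count only by a constant factor. Together with the upper bound $\text{ex}(n,\Theta_{3,5,5})=O(n^{5/4})$ furnished by Theorem~\ref{main} (here $k_1=3$, $k_2=k_3=5$, so $k^\ast=\tfrac12(3+5)=4$), this yields $\text{ex}(n,\Theta_{3,5,5})=\Theta(n^{5/4})$, proving Theorem~\ref{Thm335}. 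The one subtlety to keep an eye on is the parity/degeneracy hypotheses: $3$ and $5$ are both odd so $\Theta_{3,5,5}$ is bipartite and $1$ appears zero times, so Theorem~\ref{main} applies cleanly and there is no obstruction on the upper-bound side.
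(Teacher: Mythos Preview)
Your scaffolding is right: the construction is the bipartite point--line incidence graph over $\mathbb F_q^4$ with line directions on the moment curve $v_z=(1,z,z^2,z^3)$, giving $n=2q^4$ vertices and $q^5=(n/2)^{5/4}$ edges, and Bertrand's postulate handles general $n$. What is missing is the actual mechanism that rules out $\Theta_{3,5,5}$, and the mechanism you sketch is off.

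The key step in the paper is a structural lemma about \emph{every} $C_8$ in this graph: if $p_1\ell_1p_2\ell_2p_3\ell_3p_4\ell_4p_1$ is an $8$-cycle, then opposite lines are parallel, i.e.\ $v_1=v_3$ and $v_2=v_4$ where $v_i$ is the direction of $\ell_i$. This follows from $\sum a_iv_i=0$ with all $a_i\neq 0$ and the Vandermonde determinant. Now write the length-$3$ path $P_0$ of a hypothetical $\Theta_{3,5,5}$ as $p\,\ell\,p'\,\ell'$. Each length-$5$ path $P_i$ ($i=1,2$) together with $P_0$ is a $C_8$ in which $\ell'$ and the \emph{first} line of $P_i$ (the neighbour of $p$ on $P_i$) are opposite, hence parallel. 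But through $p$ there is only one line in any given direction, so $P_1$ and $P_2$ are forced to share their first internal vertex --- contradicting internal disjointness.

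Your proposed route instead aims to show either that ``any length-$3$ and length-$5$ pair of $p$--$p'$ paths share an internal vertex'' or that a counting bound limits the supply of length-$5$ completions. The first is simply false in this graph: it is not $C_8$-free, so internally disjoint $3$- and $5$-paths between the same endpoints abound. The second could be made to work, but only once you have the parallelism lemma, which is precisely what pins the first line of each $5$-path down to a unique choice through $p$; without it there is no counting handle. So the ``main obstacle'' you flag is not a detail to be filled in but the entire content of the argument, and the Vandermonde/parallel-opposite-sides observation is the idea you are missing.
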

\begin{proof}
Let $\Bbb F_q$ be the finite field with $q$ elements, where $q$ is a prime power. Then we consider the 
$4$-dimensional vector space $\Bbb F_q^4$ over $\Bbb F_q$. 
For any $z\in \Bbb F_q$, we obtain a direction $v_z=(1,z,z^2,z^3)$, 
which can be thought of the "discretized moment curve". 
For any $x\in \Bbb F_q^4$, we define $l_{x,z}=\{x+yv_z | y\in \Bbb F_q\}$. Then define $L_z=\{l_{x,z} | x\in \Bbb F_q^4\}$ as the family of parallel lines with the same direction $v_z$. 
Define a bipartite graph $G=G(q)$ on the bipartition $P\cup L$, where $P=\Bbb F_q^4$
and $L= \bigcup_{z\in \Bbb F_q}  L_z$. Thus each part has $q^4$ elements. 
A pair $(p,\ell) \in P\times L$ belongs to $E(G)$ if and only if $p\in \ell$. Observing that each line contains $q$ elements, 
it follows that $G$ contains $n=2q^4$ vertices and $|E(G)|=q\times |L|=q^{5}=(\frac{n}{2})^{5/4}$. 
Next we show a lemma. 
\begin{lemma}\label{fact_2} 
Suppose $p_1\ell_1 p_2\ell_2 p_3\ell_3 p_4\ell_4p_1$ is a copy of $C_8$ in $G$.
Let $v_1,v_2,v_3,v_4$ denote the directions of the lines $\ell_1, \ell_2,\ell_3,\ell_4$, 
respectively. Then $v_1=v_3, v_2=v_4$, which are two distinct directions. 
\end{lemma}

\begin{proof}[Proof of Lemma~\ref{fact_2}]
Write $\Bbb Z/ 4\Bbb Z= \{0,1,2,3\}$.
Then for each $i\in \Bbb Z/4 \Bbb Z$, we have $p_{i+1}-p_i= a_i v_i$ for some 
$a_i\in \Bbb F_q \backslash \{0\}$. Then we have 
$\sum_{i=0}^3 a_i v_i= \sum_{i=0}^3 (p_{i+1}-p_i)=0.$
Write each $v_i=(1,z_i,z_i^2,z_i^3)$ for some $z_i\in \Bbb F_q$. 
The Vandermonde determinant then tells us that there must exist
$z_i=z_{i'}$ for two different indices $i$ and $i'$. 
Note that two consecutive lines $\ell_i$ and $\ell_{i+1}$
 cannot be parallel to each other since they intersect at one point. 
 Without loss of generality we find $v_1=v_3$ and clearly this vector does not belong to $\{v_2,v_4\}$.
Then we can combine these two terms together in the above equation system and repeat the argument. 
Finally we obtain $v_2=v_4$ and finish the proof. 
\end{proof}
Back to the proof of the proposition, the graph $\Theta_{3,5,5}$ consists of two vertices $w$ and $w'$, and pairwise disjoint three paths $P_0,P_1, P_2$ connecting them, such that, $P_0$ has length $3$, and each of $P_1$ and $P_2$ 
has length $5$. 
It suffices to show that $G$ is $\Theta_{3,5,5}$-free. 
Suppose for contradiction that one can embed $\Theta_{3,5,5}$ into $G$. 
Note that the two ends of the embedded path $P_0$ 
must be a point and a line respectively. So we can write the embedded image of $P_0$ as $p\ell p' \ell'$. Note that $\ell$ and $\ell'$ are not parallel because they share one point $p'$. 
For the paths $P_1$ and $P_2$, each of their embedded image starts from 
$p$ and ends at $\ell'$. 
The second vertex of $P_1$ embeds in a line called $\ell_1$ which is 
parallel to $\ell'$ by  Lemma~\ref{fact_2}.
Similarly, 
the second vertex of $P_2$ embeds in a line called $\ell_2$ which is also parallel to $\ell'$ by Lemma~\ref{fact_2}.
This is a contradiction since $\ell_1$ and $\ell_2$ are different lines and they contain the same point $p$. This contradiction shows that $G$ is $\Theta_{3,5,5}$-free. By varying $q$ and observing Bertrand's postulate that for any integer $n>1$, there exists at least one prime $p$ contained in the integer interval $(n,2n)$, the conclusion follows. 
\end{proof}

\section*{ACKNOWLEDGMENTS}
We thank Zixiang Xu and Jie Han for useful discussions. We also thank the anonymous referee for carefully reading our manuscript and providing many useful suggestions and even corrections. 
X-C. Liu is supported by Fapesp P\'os-Doutorado grant (Grant Number  2018/03762-2).  
\bibliographystyle{plain}
\addcontentsline{toc}{chapter}{Bibliography}
\bibliography{mixed_v5}

\section*{Appendix}\label{appendix}
\begin{proof}[Proof of Lemma~\ref{regular_tree}] 
We can assume $d$ is an integer, up to taking the floor function and re-defining it. 
Thus, for any $i=1,\cdots,s$, $|L(i)|= d^i$.
We construct  $G^\ast$ by extracting a sequence of subgraphs. Consider 
the induced subgraph $G[L(s) \cup L(s+1)]$ which has edge number at least $d |L(s)|$.
By conditions (B) and (C), we have
\begin{equation}
|L(s)|+|L(s+1) | \geq \frac{1}{C_1} 
 | E \big(G[L(s) \cup 
L(s+1)] \big )| \geq \frac{1}{C_1} d |L(s)|.
\end{equation}
So we have $|L(s+1)|\geq (\frac{d}{C_1}-1)|L(s)|\geq \frac{d}{2C_1}|L(s)|$.
We choose constant  $C_s'\geq 4C_1^2C_0^2$. 
With $m= |L(s)|$, $p= \frac{d}{2C_1}$, and $C=2C_0^2C_1$ as the input for
Lemma~\ref{extracting_stars}, it follows that 
$G[L(s) \cup L(s+1)]$ contains a subgraph, 
which is a disjoint union of more than
$\frac{1} {C_s'} d^s$ 
copies of $K_{d/C_s',1}$'s, each of which is centered at a vertex in $L(s)$. 
Then we keep only the stars and their ancestors and delete all other vertices and edges from all layers and denote by $G^{(s)}$ the resulting tree. Let $L_{G^{(s)}}(i)$, $i=1,\cdots,s+1$, denote the new layers.
In particular, there are 
 $\lfloor \frac{1}{C_s'}d^s \rfloor$ vertices in the layer $L_{G^{(s)}}(s)$.

Inductively, suppose for $t=s,s-1,\ldots, 2$, we have the graph $G^{(t)}$ 
and the constant $C_t'$, such that the induced subgraph 
$G^{(t)}
[\cup_{j=t}^{s+1}L_{G^{(t)}}(j)]$ consists of 
 $\lfloor \frac{1}{C'_t}d^t \rfloor$
disjoint regular trees of type $(\frac{1}{C'_t}d, s-t+1)$. In particular, 
the $t$-th layer $L_{G^{(t)}}(t)$ has size  $\lfloor \frac{1}{C_t'}d^t \rfloor$.
 Then we \textbf{claim} in $L_{G^{(t)}}(t-1)$, 
there are more than $\frac{1}{2C_t'}d^{t-1}$ vertices whose number of children is at least 
$\frac {1}{2C_t'}d$. 
 Otherwise, there are no more than 
 $\frac{1}{2C_t'}d^{t-1}$ vertices in $L_{G^{(t)}}(t-1)$ 
 which has at least $\frac {1}{2C_t'}d$ children.
Then the total number of vertices in $L_{G^{(t)}}(t)$ 
is less than 
\begin{equation}
		\frac{1}{2C_t'}d^{t-1} \times d+ 
		(1-\frac{1}{2C_t'}) d^{t-1} \times \frac{1}{2C_t'}d<\frac {1}{C_t'}d^t,
\end{equation}
 which is a contradiction.
 
 After the claim, we can define 
 $C_{t-1}'=2C_t'$ and then 
 find a set $S\subset L^{(t)}(t-1)$, 
 consisting of $\lfloor \frac{1}{C_{t-1}'}d^{t-1} \rfloor$ vertices. All the vertices in $S$ together with their  descendants have a subgraph which restricts to a disjoint union of $\lfloor \frac{1}{C_{t-1}'}d^{t-1} \rfloor$ regular trees of type $(\frac{1}{C_{t-1}'}d,s-t+2)$. The union of this subgraph and its ancestors forms $G^{(t-1)}$. The induction step is completed.  
 With the induction, we obtain sequences of trees $G^{(s)}\supset G^{(s-1)}\supset \cdots\supset G^{(1)}$ and sequence of constants $C_s'\leq C_{s-1}'\leq \cdots\leq C_1'$. Since all the vertices in $L_{G^{(1)}}(1)$ connect with the vertex $r$, $G^{(1)}$ is a regular tree of type $(\frac{1}{C_1'}d, s+1)$. 
We define $G^\ast=G^{(1)}$, whose restriction to the first $(s+1)$ layers is 
a regular tree of type $(\frac {1}{K}d,s+1)$, where $K=C_1'$, and the proof is completed.
\end{proof}

\end{document}